\documentclass[11pt]{article}
\usepackage[letterpaper,margin=1in]{geometry}
\usepackage[T1]{fontenc}
\usepackage{lmodern}
\usepackage{microtype}
\usepackage[utf8]{inputenc}
\usepackage[english]{babel}
\usepackage{amsmath,amssymb,amsthm,mathtools}
\usepackage[hidelinks]{hyperref}
\numberwithin{equation}{section}

\newtheorem{theorem}{Theorem}[section]
\newtheorem{lemma}[theorem]{Lemma}
\newtheorem{proposition}[theorem]{Proposition}
\newtheorem{corollary}[theorem]{Corollary}

\theoremstyle{definition}
\newtheorem{definition}[theorem]{Definition}

\renewcommand{\P}{\mathrm{P}}
\newcommand{\E}{\mathrm{E}}
\newcommand{\1}{\mathbf{1}}
\newcommand{\unn}{\{1,\ldots,n\}}
\newcommand{\Om}{\Omega}

\newcommand{\setcoord}[3]{#1(#2\leftarrow #3)}
\DeclareMathOperator{\Inf}{Inf}

\DeclareMathOperator{\Piv}{Piv}

\begin{document}

\title{How much do increasing events occur disjointly?}

\author{
Pierre Tesio
\thanks{Universit\'e Paris-Saclay, CNRS, Laboratoire de math\'ematiques d'Orsay, 91405 Orsay.}
}
\date{}
\maketitle
\begin{abstract}
We establish two general quantitative lower bounds on the gap
in the classical BK inequality for increasing events.
The first one is an analogue of
Talagrand's quantitative FKG inequality \cite{Talagrand1996}, it is a function
 of the influences. The second one is specific
to the BK inequality, it involves both the influences and the probabilities of symmetric
disjoint pivots, introduced in \cite{CerfTesio2026}.
% These two results illustrate different mechanisms explaining 
% the gap in the BK inequality.
\end{abstract}
\section{Introduction}
We fix an integer \(n \geq 1\), a real number \(p\) in \(]0,1[\) and we set
\(q=1-p\).
We endow the hypercube \(\Om=\{0,1\}^n\) with the Bernoulli product measure 
of parameter p, denoted by \(\mu_p\). An element \(x=(x_1,\dots,x_n)\) of $\Om$ is smaller than or equal to
an element \(y=(y_1,\dots,y_n)\) of \(\Om\)
if \(x_i\leq y_i\) for \(1\leq i\leq n\).
A subset \(A\) of \(\Om\) is increasing if the map 
\(x\in\Om\mapsto 1_A(x)\) is non-decreasing.

The van den Berg-Kesten inequality \cite{vanDenBergKesten1985},
usually referred to as the BK inequality, 
is a fundamental tool in Bernoulli percolation.
For two increasing events
\(A\) and \(B\), it states that
\[
\mu_p(A\circ B)
\,\leq\,
\mu_p(A)\mu_p(B),
\]
The definition of the disjoint occurrence \(A\circ B\) rests on
the notion of witness.
\begin{definition}[Witness]
Let \(A\) be a subset of \(\Omega\) and let \(x\) belong to \(A\).
A subset \(I\) of \(\unn\) is a witness for \(A\) in \(x\) if any \(y\) in \(\Omega\)
which coincides with \(x\) on the components whose index is in \(I\)
is in \(A\).
\end{definition}

\noindent
We define \(A\circ B\) to be the event that
\(A\) and \(B\) can be witnessed by two disjoint sets of coordinates.
The purpose of this paper is to study a quantitative form of this inequality.
We try to understand the gap
\[
\Delta_{BK}(A,B)
\,=\,
\mu_p(A)\mu_p(B)-\mu_p(A\circ B).
\]
Our main inspiration for this work comes from Talagrand's quantitative version of the FKG
inequality. If \(A\) and \(B\) are increasing, then the FKG inequality states that
\[
\mu_p(A\cap B)
\,\geq\,
\mu_p(A)\mu_p(B)\,,
\]
meaning that increasing events are positively correlated.
In \cite{Talagrand1996}, Talagrand proved that this positive correlation
can be bounded from below
in terms of the influences of \(A\) and \(B\). 

\begin{definition}[Pivotality and influence]
For a configuration \(x\), a coordinate \(i\), and
\(\varepsilon\in\{0,1\}\), write \(x(i\leftarrow\varepsilon)\) for the
configuration obtained from \(x\) by setting coordinate \(i\) equal to
\(\varepsilon\). We say that \(i\) is pivotal for an event \(A\) in
\(x\) if
\[
\mathbf 1_A\bigl(x(i\leftarrow1)\bigr)
\,\neq\,
\mathbf 1_A\bigl(x(i\leftarrow0)\bigr).
\]
The influence of coordinate \(i\) on \(A\), denoted by
\(\Inf_i(A)\), is the probability that \(i\) is pivotal for \(A\). 
We also denote by \(\operatorname{Piv}(i,A)\) the event that coordinate \(i\) is pivotal for \(A\).
\end{definition}

\noindent
To discuss Talagrand's result, we specialize temporarily to the case
\(p=1/2\), and we write \(\mu_{1/2}\) for the uniform measure on \(\Om\). Let
\[
\varphi(x) 
\,=\,
\frac{x}{\log(\frac{e}{x})}
\] for $x$ in \(]0,1]\),
with \(\varphi(0)=0\). In \cite{Talagrand1996}, Talagrand proved that there exists a universal constant \(K>0\) such that,
for every \(n\geq1\) and all increasing events \(A\) and \(B\) in \(\Om=\{0,1\}^n\),
\[
\mu_{1/2}(A\cap B)
-
\mu_{1/2}(A)\mu_{1/2}(B)
\,\geq\,
\frac{1}{K}
\varphi\left(
\frac{1}{4}
\sum_{i=1}^n
\Inf_i(A)\Inf_i(B)
\right)\,.
\]
Thus, the FKG gap is controlled from below 
by a quantity measuring to which extent
the two events depend on the same coordinates. A first consequence
of our results is that the BK gap is controlled by the same quantity.
The BK gap also depends on the geometry of witnesses. This gives rise to
an additional bound, specific to the BK inequality. These results are 
obtained by applying Talagrand's method in the framework of the strengthened
BK inequality,
studied in \cite{CerfTesio2026}, that we recall next.
\begin{definition}[Disjoint testimony]
Let \(A\) and \(B\) be two subsets of \(\Omega\).
  The disjoint testimony of \(A\) and \(B\) is the subset
  \(A\boxtimes B\) of \(A\times B\) consisting of the pairs \((x,y)\)
  admitting disjoint witnesses for \(A\) and \(B\), i.e.,
  \begin{multline*}
    A\boxtimes B
    \,=\,
    \smash{\Big\{}\,(x,y)\in A\times B:\exists\,I,J\subset\unn,\quad
    I\cap J\,=\,\varnothing,
    I \text{ is a witness for } A \text{ in } x,\quad
    \\
    J \text{ is a witness for } B \text{ in } y
  \,\smash{\Big\}}\,.
  \end{multline*}
\end{definition}

\noindent
Denoting by \(\mu_p\otimes\mu_p\) the product measure on \(\Omega\times\Omega\), the
strengthened inequality reads
\[
\mu_p(A\circ B)
\, \leq \,
\mu_p\otimes\mu_p(A\boxtimes B)
.\]
% This inequality is implicit in the original work of van den Berg and Kesten
% \cite{vanDenBergKesten1985}. Its deduction from their original results
% was pointed out by Radhakrishnan and Tassion
% \cite{RadhakrishnanTassion2026}.
The strong BK inequality therefore 
yields the positive decomposition
\[
\Delta_{\mathrm{BK}}(A,B)
\, =\,
D(A,B)
+
\bigl(\mu_p\otimes\mu_p(A\boxtimes B)-\mu_p(A\circ B)\bigr),
\]
where
\[
D(A,B)=\mu_p(A)\mu_p(B)-\mu_p\otimes\mu_p(A\boxtimes B).
\]
The two terms correspond to different questions. The first measures the
failure to find disjoint witnesses in two independent configurations,
whereas the second measures what is lost when one passes from two
configurations admitting disjoint witnesses to disjoint occurrence in a
single configuration. The present paper focuses on the first part of the gap, 
which we refer to as the product space gap. The second part corresponds to a different 
quantitative problem. 
The quantity \(D(A,B)\) is the probability
of the pairs in \(A\times B\) which do not admit
disjoint witnesses. 
In \cite{CerfTesio2026}, we characterized the events such that \(D(A,B)=0\).
Here, we provide quantitative lower bounds on \(D(A,B)\).
Our first main result is the following analogue of
Talagrand's inequality for the BK
gap.
% In fact, we prove the stronger statement obtained by replacing the BK
% gap with \(D(A,B)\). 
Recall that \(\varphi(x) = x/\log(e/x)\) for \(x\) in \(]0,1]\), with \(\varphi(0)=0\).
Set \(p\wedge q \coloneqq \min\{p,q\}\). Define also
\[
B(p)
\,=\,
\frac{q-p}{2pq(\log q-\log p)}
\]
when \(p\neq1/2\), and \(B(1/2)=1\). This is the hypercontractivity
constant appearing in the work of Keller and Kindler
\cite{KellerKindler2013}. 
\begin{theorem}
\label{thm:influence-bound}
There exists a universal constant \(c>0\) such that, for every \(p\) in
\(]0,1[\), every \(n\geq1\), and all increasing events \(A\) and \(B\),
\[
\Delta_{\mathrm{BK}}(A,B)
\,\geq\,
c\,
\frac{(p\wedge q)^2}{pq\,B(p)}
\sqrt{\mu_p(A)\mu_p(B)}
\,
\varphi\left(
\dfrac{
pq\sum_{i=1}^n
\Inf_i(A)\Inf_i(B)
}{
\sqrt{\mu_p(A)\mu_p(B)}
}
\right)\,.
\]
The right-hand side is understood to be zero when
\(\mu_p(A)\mu_p(B)=0\).
\end{theorem}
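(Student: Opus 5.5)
Since the strong BK inequality gives $\Delta_{\mathrm{BK}}(A,B)\ge D(A,B)$, it suffices to prove the bound for $D(A,B)=\mu_p\otimes\mu_p\bigl((A\times B)\setminus (A\boxtimes B)\bigr)$. We work on the product space $\Omega\times\Omega$ with measure $\mu_p\otimes\mu_p$, with coordinates $(x,y)$. Set $f=\1_{A}(x)$, $g=\1_B(y)$, and $h=\1_{A\boxtimes B}(x,y)$. Then $D=\E[fg]-\E[h]$ with $h\le fg$. The idea is to imitate Talagrand's argument. We look for a pair of functions on $\Omega\times\Omega$ that are monotone in opposite directions, or an interpolation, whose correlation is controlled by $D$. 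The natural candidate is a ``swap'' or ``flip'' structure, comparing $(x,y)$ with the configuration obtained by exchanging $x_i$ and $y_i$ on a set of coordinates. Concretely, we use the characterization from \cite{CerfTesio2026}: a pair $(x,y)\in A\times B$ fails to be in $A\boxtimes B$ whenever some coordinate $i$ is needed by both, for instance when $i$ is pivotal for $A$ in $x$ with $x_i=1$ and pivotal for $B$ in $y$ with $y_i=1$. The set of such pairs has probability at least $p^2\,\Inf_i(A)\Inf_i(B)$ for each single $i$, since pivotality does not depend on the $i$-th coordinate. This already gives $D\ge p^2\max_i\Inf_i(A)\Inf_i(B)$.

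To get the sum over $i$ with Talagrand's $\varphi$ loss, we proceed in the following order.

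(1) Introduce the functions $u_i(x,y)=\1_{\Piv(i,A)}(x)\1_{\Piv(i,B)}(y)$, which are pivotal-type indicators independent of $(x_i,y_i)$. Then bound $D$ from below by $\E\bigl[\max_i u_i\,\1\{x_i=y_i=1\}\bigr]$ up to a factor of order $(p\wedge q)^2$. Justify this by showing that a pair with both $x_i=y_i=1$ and double pivotality at $i$ is in $(A\times B)\setminus(A\boxtimes B)$: any witness for $A$ in $x$ must contain $i$, and likewise for $B$ in $y$.

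(2) Lower bound the probability of the union $\bigcup_i\{u_i=1, x_i=y_i=1\}$ by $\sum_i\E[u_i]$ minus an overlap term. This is where Talagrand's method enters. Write the overlap via Fourier--Walsh expansion in the biased basis on $\Omega\times\Omega$ and control it by hypercontractivity with the Keller--Kindler constant $B(p)$. The expected outcome is an inequality of the form $\mathrm{P}(\cup)\ge c\,\frac{(p\wedge q)^2}{pqB(p)}\,\sqrt{\mu_p(A)\mu_p(B)}\,\varphi\bigl(pq\sum_i \Inf_i(A)\Inf_i(B)/\sqrt{\mu_p(A)\mu_p(B)}\bigr)$. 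The normalization $\sqrt{\mu_p(A)\mu_p(B)}$ arises from Cauchy--Schwarz applied to $\E[fg]$ versus the level-one weights. Indeed, $pq\,\Inf_i(A)$ relates to the first Fourier coefficient $\hat f(\{i\})$ for monotone $f$, via $\hat f(i)=\sqrt{pq}\,\Inf_i(A)$. Hence $pq\sum_i\Inf_i(A)\Inf_i(B)=\sum_i\hat f(i)\hat g(i)$, exactly Talagrand's quantity.

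(3) Run Talagrand's level-$k$ argument: decompose $D$, or a suitable correlation expression dominated by $D$, over Fourier levels. Use that the level-one part equals $\sum_i \hat f(i)\hat g(i)$. Bound higher levels by hypercontractivity, which produces the $\log(e/x)$ denominator, with the $p$-biased constant $B(p)$.

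The main obstacle is step (3), specifically finding a function whose level-one Fourier weight is exactly the cross term $\sum_i\hat f(i)\hat g(i)$ and whose total expectation is bounded by $D$. In Talagrand's FKG proof this role is played by $\E[fg]-\E f\E g$ itself. Here I would try the function $fg-h$ on $\Omega\times\Omega$, seen as a function of the coordinatewise pairs $(x_i,y_i)$. I would try to show via \cite{CerfTesio2026} that its ``diagonal'' influence at $i$ dominates $u_i\1\{x_i=y_i=1\}$. I would then adapt Talagrand's influence inequality for monotone functions to this two-copy setting. The loss of monotonicity in this setting is what forces the prefactor $(p\wedge q)^2/(pq)$.
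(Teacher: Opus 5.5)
Your reduction to $D(A,B)$ and your step (1) are correct. If $x_i=y_i=1$ and $i$ is pivotal for $A$ in $x$ and for $B$ in $y$, then every witness for $A$ in $x$ and every witness for $B$ in $y$ contains $i$, so $(x,y)\in\mathcal E(A,B)$. But this lower bound is far too lossy, and the inequality you expect in step (2) is false. Take $p=1/2$, $n=2m+1$ and $A=B=H_n$ the majority event, and write $X,Y$ for the supports of $x,y$. Then $\{u_i=1,\ x_i=y_i=1\}=\{|X|=|Y|=m+1,\ i\in X\cap Y\}$. Since $|X|+|Y|>n$ forces $X\cap Y\neq\varnothing$, the union over $i$ is exactly $\{|X|=|Y|=m+1\}$. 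Its probability is $\binom{2m+1}{m+1}^2 4^{-(2m+1)}\sim 1/(\pi m)$. Meanwhile $\Inf_i(H_n)=\binom{2m}{m}4^{-m}\sim(\pi m)^{-1/2}$, so $\mathcal I(H_n,H_n)\to 2/\pi$ and the right-hand side of the theorem tends to $\frac c2\varphi(1/\pi)>0$. So no universal $c$ makes step (2) true. Here $D(H_n,H_n)=1/4$, and almost all of the defect comes from pairs in which no single coordinate is needed by both witnesses. A union of double-pivotal events cannot see that obstruction, so any route through step (1) fails.

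Step (3) is not yet an argument, and the Fourier direction you suggest does not close by itself. The coefficients of $\1_{\mathcal E(A,B)}$ on $\chi_i=r_i(x)r_i(y)$ are $pq\bigl(\Inf_i(A)\Inf_i(B)+\pi_i(A,B)\bigr)$. For a general event $E$ of the product cube, only their $\ell^2$ norm is controlled by $\nu_p(E)\sqrt{\log(2/\nu_p(E))}$, which is what gives the paper's other main theorem. Their sum is not controlled: for $E=\{\sum_i\chi_i>\sqrt n\}$ it is of order $\sqrt n$. Moreover, $\1_{\mathcal E(A,B)}$ has no monotonicity that could replace Talagrand's.

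The missing idea is the recursive structure of $D$ itself. Conditioning on the last coordinate and identifying the four sections of $A\boxtimes B$ gives
\[
D(A,B)\,\geq\,\sum_{\varepsilon,\eta\in\{0,1\}}\lambda_{\varepsilon\eta}D(A_\varepsilon,B_\eta)+(p\wedge q)^2\Inf_n(A)\Inf_n(B),
\]
with $\lambda_{00}=(q-p)_+$, $\lambda_{01}=\lambda_{10}=p\wedge q$, $\lambda_{11}=(p-q)_+$. This is what lets you run Talagrand's induction on the dimension, rather than a decomposition over Fourier levels. Two further ingredients are then needed:
\begin{itemize}
\item The biased decoupled estimate on $\sum_k\sum_{i\neq k}\bigl|\int_{\Piv(k,A)}r_i\,d\mu_p\int_{\Piv(k,B)}r_i\,d\mu_p\bigr|$, used to choose the coordinate. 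It makes the error term $(p\wedge q)^2\sum_{i<n}\bigl(\Inf_i(A_1)-\Inf_i(A_0)\bigr)\bigl(\Inf_i(B_1)-\Inf_i(B_0)\bigr)$ at most a constant times $B(p)\Inf_n(A)\Inf_n(B)\log\bigl(4e^{3/2}\sqrt{\mu_p(A)\mu_p(B)}/(pq\,\mathcal I(A,B))\bigr)$. This error term is what separates $\mathcal I(A,B)$ from $\sum\lambda_{\varepsilon\eta}\mathcal I(A_\varepsilon,B_\eta)+\Inf_n(A)\Inf_n(B)$.
\item The joint convexity and monotonicity of $(R,s)\mapsto R\varphi(s/R)$, together with $\sum\lambda_{\varepsilon\eta}\sqrt{\mu_p(A_\varepsilon)\mu_p(B_\eta)}\leq\sqrt{\mu_p(A)\mu_p(B)}$, which close the induction.
\end{itemize}
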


\noindent
In the uniform case $p=1/2$, this bound becomes 
\[
\Delta_{\mathrm{BK}}(A,B)
\,\geq\,
c\,
\sqrt{\mu_{1/2}(A)\mu_{1/2}(B)}
\varphi\left(
    \frac{
        \sum_{i=1}^n
        \Inf_i(A)\Inf_i(B)
    }{
        4\sqrt{\mu_{1/2}(A)\mu_{1/2}(B)}
    }
\right)\,.
\]
We actually prove this result for the product space gap $D(A,B)$,
which is smaller than or equal to $\Delta_{\mathrm{BK}}(A,B)$.
This result may be viewed as a BK analogue
of Talagrand's quantitative FKG inequality, extended to the biased case.
For the FKG inequality, 
the corresponding biased extension can be deduced from
the work of Keller and Kindler \cite{KellerKindler2013}. In their study of
the quantitative relation between noise sensitivity and influences, they
gave a simplified proof of Talagrand's fundamental estimates on influences,
extended them to higher Fourier levels, biased product measures and non-monotone 
events.
The inequality of theorem~\ref{thm:influence-bound} also keeps track of the 
probabilities of $A$ and $B$ in the bound, which is a slight improvement
compared to the original estimate. This could easily be 
obtained in Talagrand's argument for the FKG case.

Influences do not capture the whole obstruction to disjoint
occurrence. Even when the influences
are small, every witness for \(A\) may be forced
to intersect every witness for \(B\). 
The disjoint testimony formulation
helps to identify
another quantity underlying such collisions. We recall the relevant
notion, the symmetric disjoint pivot, introduced in \cite{CerfTesio2026}.

\begin{definition}[Symmetric disjoint pivot]
A coordinate \(i\) is a symmetric disjoint pivot for \((A,B)\) in
\((x,y)\) if
\[
\bigl(x(i\leftarrow 0),y(i\leftarrow 0)\bigr)
\,\notin\,
A\boxtimes B\,,
\]
whereas
\[
\bigl(x(i\leftarrow 1),y(i\leftarrow 0)\bigr)
\in
A\boxtimes B
\quad\text{and}\quad
\bigl(x(i\leftarrow 0),y(i\leftarrow 1)\bigr)
\in
A\boxtimes B\,.
\]
\end{definition}

\noindent
In other words, closing coordinate \(i\) in both configurations prevents
disjoint testimony, whereas opening it in either configuration restores it.
We denote by \(\pi_i(A,B)\) the probability that \(i\) is a symmetric
disjoint pivot for \((A,B)\).
The same object appears in the analytic
framework of Radhakrishnan and Tassion
\cite{RadhakrishnanTassion2026}: in their notation, the operator
\(D_iF\) is the indicator of the symmetric disjoint pivot event.

The symmetric disjoint pivot probability and influences appear naturally 
in two key identities, which form the starting point of our proofs.
Accordingly, define \(\rho(A,B)=(\rho_i(A,B))_{1\leq i\leq n}\) by
\[
\forall i \in \{1,\dots,n\}
\qquad \rho_i(A,B)
\, = \,
\Inf_i(A)\Inf_i(B)+\pi_i(A,B),
\]
and its \(\ell^2\)-norm by
\[
\|\rho(A,B)\|_2
\, =\,
\left(\sum_{i=1}^n \rho_i(A,B)^2\right)^{1/2}
.\]
Our second main result bounds the gap in terms of this
quantity. 
\begin{theorem}
\label{thm:SDP-bound}

For every \(p\in]0,1[\), every \(n\geq 1\), and all increasing events
\(A\) and \(B\),
\[
\Delta_{BK}(A,B)
\,\geq\,
\frac{
    (p\wedge q)^2 \|\rho(A,B)\|_2
}{
    12
    \sqrt{
        \log\Biggl(
            \dfrac{
                6\sqrt{\mu_p(A)\mu_p(B)}
            }{
                (p\wedge q)^2\|\rho(A,B)\|_2
            }
        \Biggr)
    }
}
\,.
\]
The right-hand side is understood to be zero when
\(\|\rho(A,B)\|_2=0\).
\end{theorem}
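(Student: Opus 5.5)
We will prove the stronger statement with \(\Delta_{BK}(A,B)\) replaced by the product space gap \(D(A,B)\), which suffices since \(\Delta_{BK}\ge D\). We work on the doubled space \(\Omega\times\Omega\) with \(\mu_p\otimes\mu_p\). We use the function \(F=\1_{A\times B}-\1_{A\boxtimes B}\), so that \(D(A,B)=\E F\) and \(F\ge 0\) is the indicator of the set of pairs without disjoint witnesses.

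\textbf{Step 1: a semigroup representation.} Following the Radhakrishnan--Tassion framework, we interpolate between two configurations. Let \(T_t\) be a Markov semigroup on \(\Omega\times\Omega\) with the following properties: it is reversible for \(\mu_p\otimes\mu_p\), it resamples coordinates jointly, it starts from the diagonal-type coupling in which \(\mu_p(A\circ B)\) is realised, and it converges at \(t=\infty\) to the product coupling. Differentiating along \(t\) should give a key identity of the form
\[
D(A,B)=\int_0^\infty \sum_{i=1}^n c_p\,\E\bigl[(\partial_i G)\,(\partial_i T_t G')\bigr]\,dt .
\]
Here the coordinate derivatives produce exactly the two terms \(\Inf_i(A)\Inf_i(B)\) (from \(\1_{A\times B}\)) and \(\pi_i(A,B)\) (from \(\1_{A\boxtimes B}\), through the symmetric disjoint pivot operator \(D_iF\)). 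The constant \(c_p\) is of order \((p\wedge q)^2\). Monotonicity and the strong BK inequality make each summand nonnegative. In particular, the \(i\)-th contribution at small times is bounded below by a constant times \((p\wedge q)^2\rho_i(A,B)\) weighted by the correlation of the pivotal indicators. These are the ``two key identities'' alluded to in the introduction; we take them as the starting point.

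\textbf{Step 2: Talagrand-type lower bound on the time integral.} For each \(i\), let \(g_i\) be the relevant pivotal-type indicator, whose mean is comparable to \(\rho_i\). By reversibility, \(\E[g_i T_t g_i]=\|T_{t/2}g_i\|_2^2\). This is a decreasing log-convex function of \(t\) that starts at \(\E g_i^2\ge \E g_i\approx\rho_i\). By hypercontractivity, with the biased constant absorbed into \((p\wedge q)^2\) as in Keller--Kindler, \(\|T_{t/2} g_i\|_2\ge\) the norm at time \(0\) as long as \(t\) is small. More usefully, positivity together with log-convexity gives \(\|T_s g\|_2^2\ge \|g\|_2^{2}e^{-cs\,\|\nabla g\|^2/\|g\|^2}\). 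Alternatively, we use the Cauchy--Schwarz bound \(\E[g_iT_tg_i]\ge (\E g_i)^2\), which holds at all times. Summing over \(i\), integrating only up to a cutoff time \(t_0\), and using \(\sum_i \rho_i\E g_i\ge \sum\rho_i^2=\|\rho\|_2^2\) gives
\[
D(A,B)\gtrsim (p\wedge q)^2\int_0^{t_0}\|\rho\|_2^2\,e^{-\text{(decay)}}\,dt .
\]

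\textbf{Step 3: choosing \(t_0\); the main obstacle.} The difficulty is to turn the quadratic quantity \(\|\rho\|_2^2\) into the linear bound \(\|\rho\|_2/\sqrt{\log(\cdot)}\). We expect this via a Gaussian-type level-one inequality: for a \([0,1]\)-valued function of mean \(m\le\sqrt{\mu_p(A)\mu_p(B)}\), the level-one Fourier weight is at most \(C m^2\log(e/m)\) (the biased Chang/level-one inequality). Applied to \(F\), or to the product \(\1_A\otimes\1_B\), whose level-one coefficients are essentially \(\sqrt{pq}\,\rho_i\), this reads
\[
\|\rho\|_2^2\lesssim D\cdot\sqrt{\mu_p(A)\mu_p(B)}\ \text{-type}\ \times\log\frac{\sqrt{\mu_p(A)\mu_p(B)}}{D}.
\]
Concretely, we aim for \(\|\rho\|_2^2(p\wedge q)^4\le C D^2\log(M/D)\) with \(M=\sqrt{\mu_p(A)\mu_p(B)}\). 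We obtain this by writing the time-zero derivative of the identity as a level-one projection and applying the level-one inequality to the nonnegative function \(F\) of mean \(D\). Inverting \(x\mapsto x\sqrt{\log(M/x)}\), which is increasing in the relevant range, then yields \(D\ge (p\wedge q)^2\|\rho\|_2/\bigl(12\sqrt{\log(6M/((p\wedge q)^2\|\rho\|_2))}\bigr)\). The explicit constants \(6\) and \(12\) come from tracking the constant in the level-one inequality via the standard exponential-moment/Hölder proof. The delicate point is to verify that the level-one coefficients of \(F\) really dominate \(\rho_i\) with the sign needed. This is where monotonicity of \(A\), \(B\) and of \(A\boxtimes B\) in each coordinate, together with the symmetric disjoint pivot structure, must be used carefully.
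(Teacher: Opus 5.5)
\textbf{There is a genuine gap.} Your Step 3 has the right strategy: apply a Talagrand-type first-level estimate to the defect indicator $F=\1_{\mathcal E(A,B)}$, with $\mathcal E(A,B)=(A\times B)\setminus(A\boxtimes B)$ and $\nu_p(\mathcal E(A,B))=D(A,B)$, and then invert. But the two facts that make this work are left open or misidentified, and Steps 1--2 do not supply them.

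The semigroup of Step 1 cannot produce your identity. Both $\mu_p(A)\mu_p(B)$ and $\nu_p(A\boxtimes B)$ are computed under the same product measure $\nu_p=\mu_p\otimes\mu_p$. An interpolation from the diagonal coupling to the product coupling therefore computes a product quantity minus a diagonal one; for your $F$ this is $D(A,B)-\mu_p(A\cap B)+\mu_p(A\circ B)$, not $D(A,B)$. The displayed identity is never derived. Step 2 only gives a bound quadratic in $\|\rho\|_2$, which you then drop. No time integral is needed at all.

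What is needed first is the exact identity
\[
\int_{\mathcal E(A,B)}\chi_i\,d\nu_p\,=\,pq\,\rho_i(A,B),\qquad \chi_i(x,y)=r_i(x)r_i(y),\quad r_i(x)=\frac{x_i-p}{\sqrt{pq}}.
\]
You call this ``the delicate point'' and leave it unverified. It is an equality, not a one-sided domination, and it follows by conditioning on $(x_i,y_i)$.
\begin{itemize}
\item The four sections of $A\boxtimes B$ along coordinate $i$ are $A_0\boxtimes B_0$, $A_1\boxtimes B_0$, $A_0\boxtimes B_1$ and $(A_1\boxtimes B_0)\cup(A_0\boxtimes B_1)$.
\item On these sections $\chi_i$ equals $p/q,-1,-1,q/p$, with weights $q^2,pq,pq,p^2$, so a common factor $pq$ comes out.
\item The product terms factor as $(\mu_p(A_1)-\mu_p(A_0))(\mu_p(B_1)-\mu_p(B_0))=\Inf_i(A)\Inf_i(B)$.
\item Inclusion--exclusion on the union leaves exactly $\nu_p\bigl(((A_1\boxtimes B_0)\cap(A_0\boxtimes B_1))\setminus(A_0\boxtimes B_0)\bigr)=\pi_i(A,B)$.
\end{itemize}

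Second, these coefficients are not level-one Fourier coefficients on the doubled cube. $\chi_i$ is the degree-two character supported on coordinate $i$ of both copies. The level-one (Chang--Talagrand) inequality on $\{0,1\}^{2n}$ says nothing about them. The generic level-two inequality would only give $\log(2/D)$ in place of $\sqrt{\log(2/D)}$, a strictly weaker statement than the theorem. Likewise, the level-one coefficients of $\1_A\otimes\1_B$ are $\sqrt{pq}\,\Inf_i(A)\mu_p(B)$ and $\sqrt{pq}\,\mu_p(A)\Inf_i(B)$, not $\rho_i$.

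The missing observation is that $\chi_1,\dots,\chi_n$ are themselves independent, centered, and bounded by $M_p=\max(p/q,q/p)$ under $\nu_p$. Hoeffding's lemma then gives a subgaussian tail for $G=\sum_i\alpha_i\chi_i$ with $\sum_i\alpha_i^2=1$. Talagrand's truncation argument, splitting $\int_E|G|\,d\nu_p$ at $r_0=M_p\sqrt{2\log(2/\nu_p(E))}$, yields
\[
\bigl(\sum_i(\textstyle\int_E\chi_i\,d\nu_p)^2\bigr)^{1/2}\le 3M_p\,\nu_p(E)\sqrt{\log(2/\nu_p(E))}
\]
for every event $E$. With $E=\mathcal E(A,B)$ this reads $pq\|\rho(A,B)\|_2\le 3M_pD\sqrt{\log(2/D)}$. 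The factor $(p\wedge q)^2$ in the theorem is $pq/M_p$, not an absorbed hypercontractivity constant.

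Finally, your intermediate target with $\log(M/D)$ is not what this argument gives. One inverts $t\mapsto t\sqrt{\log(2/t)}$, and only afterwards moves $R=\sqrt{\mu_p(A)\mu_p(B)}$ into the logarithm. That uses $D\le R^2$ together with $pq\|\rho\|_2\le\sqrt{D(1-D)}\le R$, which is Bessel's inequality for the orthonormal $\chi_i$. This last step is what turns the constant $6$ into $12$.
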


\noindent
Theorem~\ref{thm:influence-bound} and
theorem~\ref{thm:SDP-bound} capture two different
obstructions to equality. The first involves the influences and has
the Talagrand logarithmic scale, whereas the second controls the
gap with the \(\ell^2\)-norm of influences and symmetric disjoint pivot probabilities.
We illustrate the behavior of these bounds with classical examples. 
Let $n$ be a positive integer, and take $k$ between $1$ and $n$. 
The symmetric threshold associated 
to \((k,n)\) is the event that the
configuration contains at least \(k\) open coordinates. More precisely,
\[
H_{n,k}
\,=\,
\biggl\{\,
x\in\{0,1\}^n:
\sum_{i=1}^n x_i
\,\geq\,
k
\,
\biggr\}
\,.
\]
For the threshold event $H_{n,1}$, which is the OR event, we prove in section 
\ref{sec:examples} that 
the bound of theorem \ref{thm:SDP-bound} is sharp for the product space gap $D(A,B)$. 
More precisely, for two quantities $f$ and $g$ depending on $n$, we 
write $f\asymp g$ 
if there exists constants $0<c<C<\infty$,  independent of n, such that
$cg<f<Cg$ for all $n$ large enough. When these constants are allowed to depend on
$p$, we write $f\asymp_p g$.
We have 
\[
\frac{
\|\rho(H_{n,1},H_{n,1})\|_2
}{
\sqrt{
\log\left(
\dfrac{
6
\mu_p(H_{n,1})
}{
\min(p,q)^2 \|\rho(H_{n,1},H_{n,1})\|_2
}
\right)
}
}
\,\asymp_p\,
nq^{2n}
\,\asymp_p\,
D(H_{n,1},H_{n,1})
\,.
\]
For $n=2m+1$, consider the threshold event $H_n=H_{2m+1,m+1}$, which is the majority event, we prove that,
when considering the uniform measure $p=1/2$, the bound of theorem \ref{thm:influence-bound} is of correct order :
\[
\mu(H_n)\,
\varphi\left(
\frac{\frac14 \sum_{i=1}^n
\Inf_i(H_n)\Inf_i(H_n)
 }{\mu(H_n)}
\right)
\asymp 1
\asymp
D(H_n,H_n).
\]
Most subsequent developments of the BK inequality have concerned extensions
of its range of validity rather than quantitative improvements.
Notably, Reimer extended the inequality to arbitrary events in
\cite{Reimer2000}. Talagrand gave a direct proof of the BK inequality by induction
on the dimension, based on a decomposition into coordinate sections in 
\cite{TalagrandBK}. One of his results, applied to \(A\boxtimes B\), also yields
the strengthened BK 
inequality, and his section decomposition technique is one of the key
elements of our proof. 

Other quantitative results arise in geometric
applications to critical planar percolation. Beffara and Nolin obtained a polynomial
improvement of the comparison given by Reimer's lemma for monochromatic
and polychromatic arm events in \cite{BeffaraNolin2011}. 
More recently, 
Radhakrishnan and Tassion derived  quantitative
versions of the FKG inequality and Reimer's lemma, and used them to prove
strict inequalities between arm exponents in
\cite{RadhakrishnanTassion2026}. 
These quantitative estimates are quite different from what we present here. 
They are in fact a continuous version of the formula we described in 
\cite{CerfTesio2026}, in the case of Reimer's lemma. 
The quantitative improvement that they obtain then follows 
from a careful analysis of the geometry of the events under consideration.

% These estimates are adapted to the
% geometry of the particular percolation events under consideration. To the
% best of our knowledge, no general lower bound on
% \(\mu_p(A)\mu_p(B)-\mu_p(A\circ B)\) was previously available.

The paper is organized as follows. In section~\ref{sec:sketch}, we provide a sketch
of the proofs of both theorems. In section~\ref{sec:exact-section-identities},
we establish two identities which will form the starting point for 
both proofs. 
Section~\ref{sec:analytic-estimates} contains
key analytic estimates inspired by the work of Talagrand \cite{Talagrand1996}, Keller and Kindler \cite{KellerKindler2013}.
In section~\ref{sec:proofs}, we prove the two main results.
Finally, section~\ref{sec:examples} contains examples illustrating the
sharpness and the limitations of the bounds. 
\section{Proof strategy}
\label{sec:sketch}
We start by giving an overview of the proof of 
theorem~\ref{thm:influence-bound}. For simplicity,
we only discuss here the uniform case \(p=1/2\) and denote the uniform measure 
on \(\{0,1\}^n\) by \(\mu=\mu_{1/2}\).
We follow closely the strategy of Talagrand in \cite{Talagrand1996}. 

To understand the structure of the product space gap \(D(A,B)\), the first step is to decompose it by
conditioning on the value of the last coordinate. 
Let $A$ and $B$ be two increasing events on the hypercube. We define, for
\(\varepsilon\) in \(\{0,1\}\), the sections along the last
coordinate as
\[
A_\varepsilon
=
\left\{
x\in\{0,1\}^{n-1}:
(x,\varepsilon)\in A
\right\},
\qquad
B_\varepsilon
=
\left\{
x\in\{0,1\}^{n-1}:
(x,\varepsilon)\in B
\right\}.
\]
We also extend this notion to the product space: for
an event \(C\) on the product cube \(\{0,1\}^n \times \{0,1\}^n\)
and \(\varepsilon,\eta\) in \(\{0,1\}\), we 
define the sections along the last coordinate by 
\[
C_{\varepsilon \eta}
=
\left\{
(x,y)\in\{0,1\}^{n-1}\times\{0,1\}^{n-1}:
\bigl((x,\varepsilon),(y,\eta)
\bigr)\in C
\right\}\,.
\]
We still write \(\mu\) for the uniform measure on lower dimensional 
hypercubes.
Because the measure is product, we can decompose an event along a given 
coordinate, by conditioning on its value. 
For instance, 
\[
\mu(A)=\frac12 \mu(A_0)+\frac12 \mu(A_1).
\]
We denote the product measure $\mu\otimes\mu$ as $\nu$. Decomposing the gap along the last coordinate therefore gives 
\[
\begin{aligned}
\mu(A)\mu(B)-\nu(A\boxtimes B)
=&{}
\frac14\Bigl[
\mu(A_0)\mu(B_0)
-
\nu\bigl((A\boxtimes B)_{00}\bigr)
\Bigr]
+
\frac14\Bigl[
\mu(A_1)\mu(B_0)
-
\nu\bigl((A\boxtimes B)_{10}\bigr)
\Bigr]
\\
&+
\frac14\Bigl[
\mu(A_0)\mu(B_1)
-
\nu\bigl((A\boxtimes B)_{01}\bigr)
\Bigr]
+
\frac14\Bigl[
\mu(A_1)\mu(B_1)
-
\nu\bigl((A\boxtimes B)_{11}\bigr)
\Bigr].
\end{aligned}
\]
A careful analysis of the four sections of the disjoint testimony, along with
elementary computations using the monotonicity of \(A\) and \(B\) gives
the exact decomposition
\begin{equation}
\label{eq:section-decomposition-uniform}
D(A,B)
\,=\,
\frac12 D(A_1,B_0)
+
\frac12 D(A_0,B_1)
+
\frac14
\Bigl(
\Inf_n(A)\Inf_n(B)+\pi_n(A,B)
\Bigr)
\, ,
\end{equation}
where we recall that \(\pi_n(A,B)\) is the probability that the last coordinate
is a symmetric disjoint pivot for \((A,B)\).
This is the central identity and the starting point to apply
Talagrand's method.
For instance, Talagrand obtains in \cite{Talagrand1996} for the FKG inequality, 
with obvious notations:
\[
\Delta_{FKG}(A,B)
\,=\,
\frac12\Delta_{FKG}(A_0,B_0)
+
\frac12\Delta_{FKG}(A_1,B_1)
+ 
\frac14
\Bigl(
\Inf_n(A)\Inf_n(B)
\Bigr)\,.
\]
We only focus on the proof of theorem~\ref{thm:influence-bound} for now, so
we can
ignore the last term in equation \eqref{eq:section-decomposition-uniform} and
consider the inequality 
\begin{equation}
\label{eq:inequality-section-decomposition-uniform}
D(A,B)
\,\geq\,
\frac12 D(A_1,B_0)
+
\frac12 D(A_0,B_1)
+
\frac14
\Inf_n(A)\Inf_n(B).
\end{equation}
At this point, it is tempting to iterate the argument on the two lower-dimensional gaps to recover
the influences of all coordinates. This, however, does not quite work.
On one hand, iteratively decomposing the lower-dimensional gaps
\(D(A_1,B_0)\) and \(D(A_0,B_1)\) in \eqref{eq:inequality-section-decomposition-uniform} along coordinate \(n-1\) produces
the contribution
\[
\frac18
\Bigl(
\Inf_{n-1}(A_1)\Inf_{n-1}(B_0)+\Inf_{n-1}(A_0)\Inf_{n-1}(B_1)
\Bigr).
\]
On the other hand, pivotality with respect to coordinate \(n-1\) can also be
conditioned on the value of the last coordinate, so that
\[
\Inf_{n-1}(A)
\,=\,
\frac{\Inf_{n-1}(A_0)+\Inf_{n-1}(A_1)}{2},
\]
and the same identity goes for \(B\).
The contribution that one would like to recover is therefore
\[
\begin{aligned}
\frac14\,
\Inf_{n-1}(A)\Inf_{n-1}(B)
&=
\frac1{16}
\Bigl(
\Inf_{n-1}(A_0)+\Inf_{n-1}(A_1)
\Bigr)
\Bigl(
\Inf_{n-1}(B_0)+\Inf_{n-1}(B_1)
\Bigr).
\end{aligned}
\]
The difference between the desired contribution and the one produced by
the second section decomposition is therefore
\begin{equation}
\label{eq:section-decomposition-error-uniform}
\begin{aligned}
&\frac14
\Inf_{n-1}(A)\Inf_{n-1}(B)
-\frac18
\Bigl(
\Inf_{n-1}(A_1)\Inf_{n-1}(B_0)+\Inf_{n-1}(A_0)\Inf_{n-1}(B_1)
\Bigr)
\\
&=
\frac1{16}
\bigl(\Inf_{n-1}(A_1)-\Inf_{n-1}(A_0)\bigr)
\bigl(\Inf_{n-1}(B_1)-\Inf_{n-1}(B_0)\bigr).
\end{aligned}
\end{equation}
More generally, let
\[
\mathcal I(A,B)
\,=\,
\sum_{i=1}^n
\Inf_i(A)\Inf_i(B).
\]
% and define the influence averaged over the two sections appearing
% in the decomposition by
% \[
% \overline{\mathcal I}_n
% \,=\,
% \frac12\mathcal I(A_1,B_0)
% +
% \frac12\mathcal I(A_0,B_1).
% \]
For every \(i<n\),
\[
\Inf_i(A)
\,=\,
\frac12
\Bigl(
\Inf_i(A_0)+\Inf_i(A_1)
\Bigr),
\qquad
\Inf_i(B)
\,=\,
\frac12
\Bigl(
\Inf_i(B_0)+\Inf_i(B_1)
\Bigr).
\]
The same argument as in \eqref{eq:section-decomposition-error-uniform} gives
\begin{equation}
\label{eq:uniform-influences-decomposition}
\begin{aligned}
\mathcal I(A,B)
\,=\,&{}
\frac12\mathcal I(A_1,B_0)
+
\frac12\mathcal I(A_0,B_1)
+
\Inf_n(A)\Inf_n(B)
\\
&+
\frac14
\sum_{i<n}
\Bigl(
\Inf_i(A_1)-\Inf_i(A_0)
\Bigr)
\Bigl(
\Inf_i(B_1)-\Inf_i(B_0)
\Bigr).
\end{aligned}
\end{equation}
The last term is the error which prevents the naive induction from closing.
There is a useful interpretation of this error, which Talagrand
used in his original argument \cite{Talagrand1996}.
Let us fix an increasing event $A$ and a coordinate $i\in\{1,\ldots,n\}$. 
We define
\[
r_i(x)=2x_i-1,
\]
% For an increasing event \(A\), the definition of pivotality 
% directly gives the identity:
The following identity
is a direct consequence of the definition of pivotality:
\begin{equation}
\label{eq:pivotality-identity-1}
\mathbf{1}_{\Piv(i,A)}
\,=\,
\mathbf{1}_{A_1^{(i)}}
-
\mathbf{1}_{A_0^{(i)}}    
\end{equation}
where \(A_\varepsilon^{(i)}\) denotes the 
section of \(A\) obtained by fixing
coordinate \(i\) to \(\varepsilon\).
By conditioning on the value of the coordinate \(i\), we have
\begin{equation}
\label{eq:pivotality-identity-2}
\int_A r_i\,d\mu
\,=\, \frac12\Big(\mu\big(A_1^{(i)}\big)-\mu\big(A_0^{(i)}\big)\Big)
\,=\, \frac12\operatorname{Inf}_i(A).
\end{equation}
Furthermore, \eqref{eq:pivotality-identity-1} 
and \eqref{eq:pivotality-identity-2} give that, for every \(k \neq i\),
\begin{equation}
\begin{aligned}
\label{eq:harmonic-identity}
\int_{\Piv(k,A)}
r_i\,d\mu
\,&=\,
\int_{A_1^{(k)}} r_i\,d\mu
-
\int_{A_0^{(k)}} r_i\,d\mu
\\
&=\,
\frac12
\Bigl(
\Inf_i(A_1^{(k)})
-
\Inf_i(A_0^{(k)})
\Bigr).
\end{aligned}
\end{equation}
% Consequently, if we set
% \[
% \Gamma_k(A,B)
% \,=\,
% \sum_{i\neq k}
% \Bigl(
% \int_{\Piv(k,A)}
% r_i\,d\mu
% \Bigr)
% \Bigl(
% \int_{\Piv(k,B)}
% r_i\,d\mu
% \Bigr),
% \]
% then \eqref{eq:uniform-influences-decomposition} becomes, for \(k=n\),
With this interpretation, the identity 
\eqref{eq:uniform-influences-decomposition} becomes
\[
\mathcal I(A,B)
\,=\,
\frac12\mathcal I(A_1,B_0)
+
\frac12\mathcal I(A_0,B_1)
+
\Inf_n(A)\Inf_n(B)
+
\sum_{i<n}
\Bigl(
\int_{\Piv(n,A)}
r_i\,d\mu
\Bigr)
\Bigl(
\int_{\Piv(n,B)}
r_i\,d\mu
\Bigr),
\]
The crucial point is that these errors cannot be large for every coordinate
at the same time. More precisely,
Talagrand's decoupled estimate has, up to a universal constant $c$, the form
\[
\sum_{k=1}^n
\sum_{i\neq k}
\left|
\Bigl(
\int_{\Piv(k,A)}
r_i\,d\mu
\Bigr)
\Bigl(
\int_{\Piv(k,B)}
r_i\,d\mu
\Bigr)
\right|
\,\leq\,
c \,
\mathcal I(A,B)
\log\left(
\frac{c}{\mathcal I(A,B)}
\right).
\]
Since
\[
\mathcal I(A,B)
\,=\,
\sum_{k=1}^n
\Inf_k(A)\Inf_k(B)\,,
\]
there is a coordinate \(k\) for which the corresponding
error is controlled by
\(\Inf_k(A)\Inf_k(B)\), up to the logarithmic factor above.
After relabeling this coordinate as \(n\), the term
\(\frac14\Inf_n(A)\Inf_n(B)\)
in \eqref{eq:section-decomposition-uniform} can be used to compensate for the error 
terms in \eqref{eq:uniform-influences-decomposition}.
By reasoning by induction, we will prove that
Talagrand's decoupled estimate is precisely what is needed to control
the error and close the argument.
% The logarithmic loss in the decoupled estimate justifies
% the apparition of the function
% \[
% \varphi(u)
% \,=\,
% \frac{u}{\log(e/u)}
% \]
% in theorem~\ref{thm:influence-bound}.

We now turn to theorem~\ref{thm:SDP-bound}. One can prove
this result by a similar induction by using analogous intermediate 
estimates for the symmetric disjoint pivots, however, there is a much
shorter argument.
Let
\begin{equation}
\label{eq:defect-event}
\mathcal E(A,B)
=
(A\times B)\setminus(A\boxtimes B)
\end{equation}
be the defect event, so that
\[
\nu\bigl(\mathcal E(A,B)\bigr)
=
D(A,B).
\]
For every coordinate \(i\), define
\[
\chi_i(x,y)
=
(2x_i-1)(2y_i-1).
\]
The variables \(\chi_1,\ldots,\chi_n\) are independent centered
Rademacher variables under \(\nu\). The identity \eqref{eq:section-decomposition-uniform}
also has a Fourier interpretation. 
Indeed, we will prove that, by conditioning on the \(i\)-th coordinates of the two configurations,
we have
\begin{equation}
\label{eq:uniform-fourier-identity}
\int_{\mathcal E(A,B)}
\chi_i\,d\nu
=
\frac14
\Bigl(
\Inf_i(A)\Inf_i(B)
+
\pi_i(A,B)
\Bigr)
\,=\,
\frac14\rho_i(A,B).
\end{equation}
and therefore
\[
\left(
\sum_{i=1}^n
\left(
\int_{\mathcal E(A,B)}
\chi_i\,d\nu
\right)^2
\right)^{1/2}
=
\frac14\|\rho(A,B)\|_2.
\]
The point is that the left-hand side is simply the Euclidean norm of a
family of first-level Fourier coefficients of an indicator function.
An extension of proposition 2.2 in \cite{Talagrand1996} to
the product cube, gives that, for
an arbitrary event \(E\)
\[
\left(
\sum_{i=1}^n
\left(
\int_E \chi_i\,d\nu
\right)^2
\right)^{1/2}
\leq
\,
C\,\nu(E)
\sqrt{
\log\left(
\frac{2}{\nu(E)}
\right)
}
\,,
\]
for a universal constant \(C>0\).
Applying this estimate to
\(E=\mathcal E(A,B)\) then yields
\[
\frac14\|\rho(A,B)\|_2
\leq
C\,D(A,B)
\sqrt{
\log\left(
\frac{2}{D(A,B)}
\right)
}.
\]
This way, the whole vector of influence and symmetric-disjoint-pivot
contributions is controlled at once, without any induction.
Rewriting this inequality will give
\[
D(A,B)
\geq
c\,
\frac{
\|\rho(A,B)\|_2
}{
\sqrt{
\log\left(
\dfrac{
C\sqrt{\mu(A)\mu(B)}
}{
\|\rho(A,B)\|_2
}
\right)
}
},
\]
for universal constants \(c,C>0\), which is the uniform version of
theorem~\ref{thm:SDP-bound}.

\section{Exact section identities}
\label{sec:exact-section-identities}
In this section, we establish two key identities which
form the starting points of both proofs. These identities
naturally present the influences and symmetric disjoint pivot probabilities 
as the relevant quantities to understand the gap to equality.
The first identity is an exact decomposition of the product
space gap \(D(A,B)\) along the last coordinate. 
The second identifies certain first-level Fourier 
coefficients of the defect event $\mathcal E(A,B)$ defined in 
the previous section. 
Of course, the last 
coordinate does not play a special role, and all the results of this section
hold for any coordinate \(i\) in \(\unn\).

When events
are defined on a lower-dimensional cube, the same symbols \(\mu_p\) and
\(\nu_p\) denote the corresponding Bernoulli product measures.
We start by identifying the four sections along the last coordinate
of the disjoint testimony.

\begin{lemma}
\label{lem:sections-of-disjoint-testimony}

Let \(A\) and \(B\) be increasing events on the discrete cube. The
four sections of the disjoint testimony \(A\boxtimes B\) along the
last coordinate are given by
\[
(A\boxtimes B)_{00}
=
A_0\boxtimes B_0,
\qquad
(A\boxtimes B)_{10}
=
A_1\boxtimes B_0,
\qquad
(A\boxtimes B)_{01}
=
A_0\boxtimes B_1,
\]
and
\[
(A\boxtimes B)_{11}
=
(A_1\boxtimes B_0)
\cup
(A_0\boxtimes B_1).
\]
\end{lemma}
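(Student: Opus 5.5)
The plan is to work directly with witnesses and track how a witness in dimension \(n\) restricts to dimension \(n-1\), and conversely how it lifts. The basic translation is this. Let \(I\subset\unn\) be a witness for \(A\) in \((x,\varepsilon)\). If \(n\notin I\), then \(I\) is a witness for \(A_0\) in \(x\), and also for \(A_1\). If \(n\in I\), then \(I\setminus\{n\}\) is a witness for \(A_\varepsilon\) in \(x\). Conversely, suppose \(I'\subset\{1,\ldots,n-1\}\) is a witness for \(A_\varepsilon\) in \(x\). Then \(I'\cup\{n\}\) is a witness for \(A\) in \((x,\varepsilon)\). If \(\varepsilon=0\), monotonicity makes \(I'\) itself a witness for \(A\) in \((x,\varepsilon')\) for either \(\varepsilon'\), since \(A_0\subset A_1\) because \(A\) is increasing. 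The same statements hold for \(B\).

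\emph{Sections \((00)\), \((10)\), \((01)\).}
\begin{itemize}
\item For \((00)\): if \(((x,0),(y,0))\in A\boxtimes B\) with disjoint witnesses \(I,J\), then \(I\setminus\{n\}\) and \(J\setminus\{n\}\) are disjoint witnesses for \(A_0\) in \(x\) and \(B_0\) in \(y\). Conversely, disjoint witnesses \(I',J'\) for \(A_0,B_0\) are already witnesses for \(A,B\) in \((x,0),(y,0)\), by the monotonicity remark.
\item For \((10)\), the forward direction is the same restriction. It uses that removing \(n\) from a witness gives a witness of the section at the actual value of coordinate \(n\), here \(A_1\) and \(B_0\).
\item For the converse in \((10)\), take disjoint witnesses \(I'\) for \(A_1\) in \(x\) and \(J'\) for \(B_0\) in \(y\). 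Use \(I'\cup\{n\}\) for \(A\) in \((x,1)\) and \(J'\) alone for \(B\) in \((y,0)\), which is valid since \(B_0\subset B_1\). These remain disjoint.
\item The case \((01)\) is symmetric.
\end{itemize}

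\emph{Section \((11)\).}
\begin{itemize}
\item The inclusion \(\supset\) is the lifting just described. From disjoint witnesses of \(A_1\boxtimes B_0\), add \(n\) to the \(A\)-witness and keep the \(B\)-witness unchanged. Since \(n\notin J'\) and \(B_0\)-witnesses work for \(B\) at any last coordinate, this gives disjoint witnesses in \(((x,1),(y,1))\). Symmetrically for \(A_0\boxtimes B_1\).
\item For \(\subset\), take disjoint witnesses \(I,J\) for \(A\) in \((x,1)\) and \(B\) in \((y,1)\). At most one of them contains \(n\). If \(n\notin J\), then \(I\setminus\{n\}\) witnesses \(A_1\) in \(x\) and \(J\) witnesses \(B_0\) in \(y\), so \((x,y)\in A_1\boxtimes B_0\). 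If instead \(n\notin I\), we land in \(A_0\boxtimes B_1\).
\end{itemize}

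The only point needing care is the claim that a witness \(J\) for \(B\) in \((y,\eta)\) with \(n\notin J\) is a witness for \(B_0\) in \(y\). This is true because any \(z\) agreeing with \(y\) on \(J\) gives \((z,0)\), which agrees with \((y,\eta)\) on \(J\) and hence lies in \(B\). No monotonicity is needed there. Monotonicity is used only in the lifting directions.

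I expect no real obstacle. The main thing to keep straight is which direction uses that \(A\) and \(B\) are increasing. For non-monotone events the \((00)\) lift would fail, which is why the hypothesis appears.
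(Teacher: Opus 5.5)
Your proof is correct and follows essentially the same argument as the paper: for the $11$ section, you obtain $\subseteq$ by splitting according to which of the two disjoint witnesses avoids coordinate $n$, and $\supseteq$ by adjoining $n$ to one witness, exactly as the paper does. You additionally write out the three sections the paper dismisses as ``similar and easier'', and you correctly note that monotonicity ($A_0\subseteq A_1$, $B_0\subseteq B_1$) is used only in the lifting directions.
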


\begin{proof}
We recall that, for increasing events \(A\) and \(B\),
\(A_0 \subseteq A_1\,\) and \(B_0 \subseteq B_1\).
Let $x,y$ belong to \(\{0,1\}^{n-1}\). We only prove the identity for the
$11$ section, 
as the other cases are 
similar and easier. 

Suppose that \((x,y)\) is in \((A\boxtimes B)_{11}\),
and let \(K,L\) be disjoint witnesses for \(A\) in \((x,1)\) and \(B\)
in \((y,1)\). Since \(K\cap L=\varnothing\), the last coordinate belongs
to at most one of \(K\) and \(L\). If \(n\) does not belong to \(L\), then
\(K\setminus\{n\}\) witnesses \(A_1\), while \(L\) witnesses \(B_0\),
and therefore
\((x,y)\in A_1\boxtimes B_0\).
If \(n\) does not belong to \(K\), the symmetric argument gives
\((x,y)\in A_0\boxtimes B_1\).
At least one of these two cases always occurs, and hence
\begin{equation}
\label{eq:section-11-inclusion}
(A\boxtimes B)_{11}
\subseteq
(A_1\boxtimes B_0)\cup(A_0\boxtimes B_1).
\end{equation}
Conversely, suppose that \((x,y)\) is in \((A_1\boxtimes B_0)\cup(A_0\boxtimes B_1)\). 
If
\((x,y)\in A_1\boxtimes B_0\), take disjoint witnesses \(K,L\subseteq\{1,\dots,n-1\}\) for \(A_1\) and \(B_0\).
Then \(K\cup\{n\}\) witnesses \(A\) in \((x,1)\), while \(L\) witnesses
\(B\) in \((y,1)\). Thus
\((x,y)\) is in \((A\boxtimes B)_{11}\).
The same argument applies to \(A_0\boxtimes B_1\), which proves
the converse inclusion to \eqref{eq:section-11-inclusion} 
and completes the proof.
\end{proof}
\noindent
We are now ready to state the main result of this section, which expresses
the decomposition of the product space gap along the last coordinate. 

\begin{proposition}
\label{prop:exact-section-identity}
Let A and B be increasing events on the discrete cube. Then 
\[
\begin{aligned}
D(A,B) 
\,=\,
(q-p)_+ D(A_0,B_0)
+
(p\wedge q) D(A_1,B_0)
+
(p\wedge q) D(A_0,B_1)
\\
+
(p-q)_+ D(A_1,B_1)
+
(p \wedge q)^2\Bigl(
\Inf_n(A)\Inf_n(B)
+ 
\pi_n(A,B)
\Bigr)
\\
+
(p-q)_+
\mu_p \otimes \mu_p\left(
(A_1\boxtimes B_1)
\setminus
\Bigl(
(A_1\boxtimes B_0)
\cup
(A_0\boxtimes B_1)
\Bigr)
\right)
\end{aligned}
\]
\end{proposition}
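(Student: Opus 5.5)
The plan is to condition on the last coordinate of both configurations, so that all four section pairs appear with weights \(q^2,pq,pq,p^2\). I then rewrite the result using lemma~\ref{lem:sections-of-disjoint-testimony} and one algebraic identity.

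\emph{Step 1: decomposition.} Write \(\nu_p=\mu_p\otimes\mu_p\). Since \(\mu_p(A)=q\mu_p(A_0)+p\mu_p(A_1)\), and similarly for \(B\) and for \(\nu_p(A\boxtimes B)\) (with the four sections), lemma~\ref{lem:sections-of-disjoint-testimony} gives
\[
D(A,B)=q^2D(A_0,B_0)+pq\,D(A_1,B_0)+pq\,D(A_0,B_1)+p^2\Bigl[\mu_p(A_1)\mu_p(B_1)-\nu_p\bigl((A_1\boxtimes B_0)\cup(A_0\boxtimes B_1)\bigr)\Bigr].
\]

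\emph{Step 2: key identity.} I note \(A_0\boxtimes B_0\subseteq (A_1\boxtimes B_0)\cap(A_0\boxtimes B_1)\). The difference of these two sets is exactly the symmetric disjoint pivot event at coordinate \(n\): that event does not depend on the \(n\)-th coordinates of \(x,y\), and its three conditions are the three section memberships. Hence
\[
\nu_p\bigl((A_1\boxtimes B_0)\cap(A_0\boxtimes B_1)\bigr)=\nu_p(A_0\boxtimes B_0)+\pi_n(A,B).
\]
I also use \(\Inf_n(A)=\mu_p(A_1)-\mu_p(A_0)\), so
\[
\mu_p(A_1)\mu_p(B_1)-\mu_p(A_1)\mu_p(B_0)-\mu_p(A_0)\mu_p(B_1)=\Inf_n(A)\Inf_n(B)-\mu_p(A_0)\mu_p(B_0).
\]
Expanding the union by inclusion–exclusion, these combine into
\[
\mu_p(A_1)\mu_p(B_1)-\nu_p(\cup)=D(A_1,B_0)+D(A_0,B_1)-D(A_0,B_0)+\Inf_n(A)\Inf_n(B)+\pi_n(A,B).\qquad(\ast)
\]

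\emph{Step 3, case \(p\le q\).} Substitute \((\ast)\) into the bracket. The coefficients become \(q^2-p^2=q-p\) for \(D(A_0,B_0)\), \(pq+p^2=p\) for each mixed term, and \(p^2\) for \(\Inf_n(A)\Inf_n(B)+\pi_n(A,B)\). This is the claim, since the \((p-q)_+\) terms vanish.

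\emph{Step 3, case \(p>q\).} Here I keep the \(p^2\) block but split it instead as
\[
\mu_p(A_1)\mu_p(B_1)-\nu_p(\cup)=D(A_1,B_1)+\nu_p\bigl((A_1\boxtimes B_1)\setminus\cup\bigr).
\]
This uses \(\cup\subseteq A_1\boxtimes B_1\), which holds by monotonicity: a witness for \(A_0\) is a witness for \(A_1\) because \(A_0\subseteq A_1\). Next I use \((\ast)\) to rewrite \(q^2\) times this \(p^2\)-bracket, which eliminates \(q^2D(A_0,B_0)\). Writing \(p^2=(p^2-q^2)+q^2=(p-q)+q^2\), the split form carries weight \(p-q\) and produces \((p-q)D(A_1,B_1)\) together with the extra set term. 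The \(q^2\) part goes through \((\ast)\), giving the mixed terms with coefficient \(pq+q^2=q\) and the influence/pivot term with coefficient \(q^2\).

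The main obstacle is the identification in Step 2 of the difference of the intersection and \(A_0\boxtimes B_0\) with the symmetric disjoint pivot event. I would verify it directly from the definition, using lemma~\ref{lem:sections-of-disjoint-testimony} to translate membership of \((x(n\leftarrow\varepsilon),y(n\leftarrow\eta))\) in \(A\boxtimes B\) into membership of \((x,y)\) in the corresponding sections. The rest is bookkeeping of coefficients.
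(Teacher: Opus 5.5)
Your proposal is correct and follows essentially the paper's proof: the same four-section decomposition via lemma~\ref{lem:sections-of-disjoint-testimony}, inclusion--exclusion on the union with identification of the symmetric disjoint pivot event, the factorization into $\Inf_n(A)\Inf_n(B)$, and the containment $(A_1\boxtimes B_0)\cup(A_0\boxtimes B_1)\subseteq A_1\boxtimes B_1$. The only difference is cosmetic: you treat the cases $p\le q$ and $p>q$ separately, whereas the paper handles both at once by writing $p^2=(p\wedge q)^2+(p-q)_+$, $q^2=(p\wedge q)^2+(q-p)_+$ and $p\wedge q=pq+(p\wedge q)^2$.
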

\begin{proof}
For simplicity, we write \(\nu_p\) for the product measure \(\mu_p\otimes\mu_p\).
We start by decomposing the gap along the value of the last coordinate, and use 
lemma~\ref{lem:sections-of-disjoint-testimony} to identify the 
four sections of the disjoint testimony.
This gives
\begin{equation}
\begin{aligned}
\label{fdcpo}
&D(A,B)
={} q^2 \Bigl[
    \mu_p(A_0)\mu_p(B_0)
    - \nu_p(A_0 \boxtimes B_0)
\Bigr]+ pq \Bigl[
    \mu_p(A_1)\mu_p(B_0)
    - \nu_p(A_1 \boxtimes B_0)
\Bigr] \\
&+ pq \Bigl[
    \mu_p(A_0)\mu_p(B_1)
    - \nu_p(A_0 \boxtimes B_1)
\Bigr] 
 + p^2 \Bigl[
    \mu_p(A_1)\mu_p(B_1)
    - \nu_p\Bigl(
            (A_1 \boxtimes B_0)
            \cup (A_0 \boxtimes B_1)
    \Bigr)
\Bigr].
\end{aligned}
\end{equation}
We only need to rework the first and last terms to prove
the proposition. For simplicity,
we write,  
\[
\begin{aligned}
\Delta
&\coloneqq
D(A,B)
-
pq\Bigl(
    D(A_1,B_0)+D(A_0,B_1)
\Bigr)
\\
&=
q^2\Bigl[
    \mu_p(A_0)\mu_p(B_0)
    -
    \nu_p(A_0\boxtimes B_0)
\Bigr]
+
p^2\Bigl[
    \mu_p(A_1)\mu_p(B_1)
    -
    \nu_p\Bigl(
        (A_1\boxtimes B_0)
        \cup
        (A_0\boxtimes B_1)
    \Bigr)
\Bigr].
\end{aligned}
\]
The first step is to combine the two terms with the help of the following
identities: 
\[
p^2=(p\wedge q)^2+(p-q)_+ \quad 
q^2=(p\wedge q)^2+(q-p)_+.
\]
We obtain
\begin{equation}
\label{eq:Delta_1}
\begin{aligned}
\Delta
={}&
(p\wedge q)^2
\Bigl[
    \mu_p(A_0)\mu_p(B_0)
    +
    \mu_p(A_1)\mu_p(B_1)
    -
    \nu_p(A_0\boxtimes B_0)
    -
    \nu_p\Bigl(
        (A_1\boxtimes B_0)
        \cup
        (A_0\boxtimes B_1)
    \Bigr)
\Bigr]
\\
&+
(q-p)_+
\Bigl[
    \mu_p(A_0)\mu_p(B_0)
    -
    \nu_p(A_0\boxtimes B_0)
\Bigr]
\\
&+
(p-q)_+
\Bigl[
    \mu_p(A_1)\mu_p(B_1)
    -
    \nu_p\Bigl(
        (A_1\boxtimes B_0)
        \cup
        (A_0\boxtimes B_1)
    \Bigr)
\Bigr].
\end{aligned}
\end{equation}
The second step is to develop the union in the first summand of \eqref{eq:Delta_1}:
\begin{equation}
    \label{eq:PDS expansion}
\nu_p\Bigl(
    (A_1\boxtimes B_0)
    \cup
    (A_0\boxtimes B_1)
\Bigr)
=
\nu_p(A_1\boxtimes B_0)
+
\nu_p(A_0\boxtimes B_1)
-
\nu_p\Bigl(
    (A_1\boxtimes B_0)
    \cap
    (A_0\boxtimes B_1)
\Bigr).
\end{equation}
\noindent
We recognize that the event 
\[
(A_1\boxtimes B_0)
\cap
(A_0 \boxtimes B_1)
\setminus
A_0 \boxtimes B_0
\]
is, by definition, the event that the last coordinate is a symmetric disjoint 
pivot for the pair $(A,B)$. Its probability is therefore $\pi_n(A,B)$.
To make $D(A_1,B_0)$ and $D(A_0,B_1)$ appear, we
add and subtract 
\[
(p\wedge q)^2
\bigl(
\mu_p(A_1)\mu_p(B_0)+\mu_p(A_0)\mu_p(B_1)
\bigr)
\] to 
\eqref{eq:Delta_1}. By regrouping the terms, \eqref{eq:Delta_1} now becomes 
\begin{equation}
\label{eq:Delta_2}
\begin{aligned}
\Delta
={}&
(p\wedge q)^2
\Biggl[
\pi_n(A,B)
+
\mu_p(A_0)\mu_p(B_0)
+
\mu_p(A_1)\mu_p(B_1)
-
\mu_p(A_1)\mu_p(B_0)
-
\mu_p(A_0)\mu_p(B_1)
\Biggr]
\\
&
+
(p\wedge q)^2
\Bigl[
    D(A_1,B_0)+D(A_0,B_1)
\Bigr]
+
(q-p)_+D(A_0,B_0)
\\
&+
(p-q)_+
\Bigl[
    \mu_p(A_1)\mu_p(B_1)
    -
    \nu_p\Bigl(
        (A_1\boxtimes B_0)
        \cup
        (A_0\boxtimes B_1)
    \Bigr)
\Bigr].
\end{aligned}
\end{equation}
\noindent
The terms involving $\mu_p$ in the first summand of \eqref{eq:Delta_2}
factor as the product of the influences:
\begin{equation}
\label{eq:influence-factorization}
\begin{aligned}
&\mu_p(A_0)\mu_p(B_0)
+\mu_p(A_1)\mu_p(B_1)
-\mu_p(A_1)\mu_p(B_0)
-\mu_p(A_0)\mu_p(B_1)
\\
&\qquad=
\bigl(\mu_p(A_1)-\mu_p(A_0)\bigr)
\bigl(\mu_p(B_1)-\mu_p(B_0)\bigr)
\,=\,
\operatorname{Inf}_n(A)\operatorname{Inf}_n(B).
\end{aligned}
\end{equation}
By definition of \(\Delta\) and the use of the identity 
\(p\wedge q = pq+(p\wedge q)^2\), \eqref{eq:Delta_2} becomes
\begin{equation}
\label{eq:D-recursion}
\begin{aligned}
D(A,B)
={}&
(p\wedge q)^2
\Bigl[
    \pi_n(A,B)
    +
    \operatorname{Inf}_n(A)\operatorname{Inf}_n(B)
\Bigr]
+
(p\wedge q)
\Bigl[
    D(A_1,B_0)
    +
    D(A_0,B_1)
\Bigr]
\\
&+
(q-p)_+D(A_0,B_0)
+
(p-q)_+
\Bigl[
    \mu_p(A_1)\mu_p(B_1)
    -
    \nu_p\Bigl(
        (A_1\boxtimes B_0)
        \cup
        (A_0\boxtimes B_1)
    \Bigr)
\Bigr].
\end{aligned}
\end{equation}
We now simply need to add and subtract \((p-q)_+\,\nu_p(A_1\boxtimes B_1)\)
to \eqref{eq:D-recursion}. 
Since $A$ and $B$ are increasing, we have
\[
(A_1\boxtimes B_0)\cup(A_0\boxtimes B_1)\subseteq A_1\boxtimes B_1\,.
\]
This concludes the proof of the proposition.
\end{proof}
\noindent
In the uniform case, the coefficients \((p-q)_+\) and
\((q-p)_+\) vanish. Proposition~\ref{prop:exact-section-identity}
reduces to the identity \eqref{eq:section-decomposition-uniform}:
\[
D(A,B)
\,=\,
\frac{1}{2}D(A_1,B_0)
+
\frac{1}{2}D(A_0,B_1)
+
\frac{1}{4}\rho_n(A,B)\,.
\]
We present next an identity that relates the Fourier coefficients 
of the indicator function of the event 
\(\mathcal E(A,B)=(A\times B)\setminus(A\boxtimes B)\)
to the influences and symmetric disjoint pivot probabilities. 
For \(x,y\in\{0,1\}^n\) and \(i \in \unn\), define 
\begin{equation}
\label{eq:chi_i}
r_i(x)
\,=\,
\frac{x_i-p}{\sqrt{pq}}\,,
\qquad
\chi_i(x,y)
\,=\,
r_i(x)r_i(y)\,.
\end{equation}
The functions \(\chi_1,\ldots,\chi_n\) form an orthonormal family in
\(L^2(\nu_p)\).

\begin{proposition}
\label{prop:diagonal-fourier}
For every coordinate \(i\) in \(\{1,\dots,n\}\) and increasing events 
\(A\) and \(B\), we have
\[
\int_{\mathcal E(A,B)}
\chi_i\,d\nu_p
\,=\,
pq\,\Bigl(
    \Inf_i(A)\Inf_i(B)
    +
    \pi_i(A,B)
\Bigr).
\]
\end{proposition}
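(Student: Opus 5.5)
The plan is to condition on the pair of \(i\)-th coordinates \((x_i,y_i)\), exactly as in the proof of proposition~\ref{prop:exact-section-identity}, and to exploit the fact that \(\chi_i\) is constant on each of the four resulting sections. By symmetry of the roles of the coordinates, I take \(i=n\) and use the section notation of lemma~\ref{lem:sections-of-disjoint-testimony}.

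First, I compute the values of \(\chi_n\) on the four sections. Since \(r_n(1)=\sqrt{q/p}\) and \(r_n(0)=-\sqrt{p/q}\), we get
\[
\chi_n=\tfrac{q}{p}\ \text{on}\ (1,1),\qquad
\chi_n=\tfrac{p}{q}\ \text{on}\ (0,0),\qquad
\chi_n=-1\ \text{on}\ (1,0)\ \text{and}\ (0,1).
\]
The corresponding \(\nu_p\)-weights are \(p^2\), \(q^2\), \(pq\) and \(pq\). Write \(e_{\varepsilon\eta}=\nu_p\bigl(\mathcal E(A,B)_{\varepsilon\eta}\bigr)\), where the measure is taken on the lower-dimensional cube. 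The weights then combine into
\[
\int_{\mathcal E(A,B)}\chi_n\,d\nu_p
= pq\bigl(e_{11}+e_{00}-e_{10}-e_{01}\bigr).
\]
This is the key simplification: every weight times the corresponding value of \(\chi_n\) is \(\pm pq\), so no \((p-q)_+\) correction terms appear, in contrast with proposition~\ref{prop:exact-section-identity}.

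Next, I identify the sections of the defect event. We have \(\mathcal E(A,B)_{\varepsilon\eta}=(A_\varepsilon\times B_\eta)\setminus(A\boxtimes B)_{\varepsilon\eta}\), and \((A\boxtimes B)_{\varepsilon\eta}\subseteq A_\varepsilon\times B_\eta\). Hence
\[
e_{\varepsilon\eta}=\mu_p(A_\varepsilon)\mu_p(B_\eta)-\nu_p\bigl((A\boxtimes B)_{\varepsilon\eta}\bigr).
\]
I then substitute the four sections given by lemma~\ref{lem:sections-of-disjoint-testimony}. The product terms combine exactly as in \eqref{eq:influence-factorization} into \(\Inf_n(A)\Inf_n(B)\). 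The disjoint-testimony terms give
\[
\nu_p(A_1\boxtimes B_0)+\nu_p(A_0\boxtimes B_1)-\nu_p\bigl((A_1\boxtimes B_0)\cup(A_0\boxtimes B_1)\bigr)-\nu_p(A_0\boxtimes B_0).
\]
By inclusion–exclusion, the first three terms equal \(\nu_p\bigl((A_1\boxtimes B_0)\cap(A_0\boxtimes B_1)\bigr)\).

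The only point requiring care is the final identification with \(\pi_n(A,B)\). Since \(A_0\subseteq A_1\) and \(B_0\subseteq B_1\) are increasing and a witness in a smaller event remains valid after enlarging the event, we have \(A_0\boxtimes B_0\subseteq (A_1\boxtimes B_0)\cap(A_0\boxtimes B_1)\). The difference of the last two measures is therefore the \(\nu_p\)-measure of the set difference
\[
\bigl((A_1\boxtimes B_0)\cap(A_0\boxtimes B_1)\bigr)\setminus(A_0\boxtimes B_0).
\]
By lemma~\ref{lem:sections-of-disjoint-testimony} and the definition of a symmetric disjoint pivot, this set is exactly the event that \(n\) is a symmetric disjoint pivot, as already noted in the proof of proposition~\ref{prop:exact-section-identity}. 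That event does not depend on \((x_n,y_n)\), so its measure on the lower-dimensional cube equals \(\pi_n(A,B)\). Collecting terms gives \(pq\bigl(\Inf_n(A)\Inf_n(B)+\pi_n(A,B)\bigr)\), which is the claim.
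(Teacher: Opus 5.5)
Your proposal is correct and follows essentially the same route as the paper. You condition on $(x_n,y_n)$, use that each section weight times the value of $\chi_n$ equals $\pm pq$, and substitute the sections from lemma~\ref{lem:sections-of-disjoint-testimony}; the product terms then factor into $\Inf_n(A)\Inf_n(B)$, and inclusion--exclusion identifies $\pi_n(A,B)$. Your explicit check that $A_0\boxtimes B_0\subseteq (A_1\boxtimes B_0)\cap(A_0\boxtimes B_1)$ fills in a step the paper uses without comment.
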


\begin{proof}
Without loss of generality, we consider the last coordinate.
The function \(\chi_n\) takes the values
\[
\chi_n(0,0)
\,=\,
\frac{p}{q}\,,
\qquad
\chi_n(1,0)
\,=\,
\chi_n(0,1)
\,=\,
-1\,,
\qquad
\chi_n(1,1)
\,=\,
\frac{q}{p}\,.
\]
Conditioning on the last coordinates of the two configurations
gives

\[
\begin{aligned}
\int_{\mathcal E(A,B)}
\chi_n\,d\nu_p
\,={}&
q^2\frac{p}{q}
    \nu_p\Bigl(\bigl(\mathcal E(A,B)\bigr)_{00}\Bigr)
-pq \,
    \nu_p\Bigl(\bigl(\mathcal E(A,B)\bigr)_{10}\Bigr)
\\
&-pq\,\nu_p\Bigl(\bigl(\mathcal E(A,B)\bigr)_{01}\Bigr)
+
p^2\frac{q}{p}
    \nu_p\Bigl(\bigl(\mathcal E(A,B)\bigr)_{11}\Bigr).
\end{aligned}
\]
After developing each term, we obtain
\begin{equation}
\label{eq:diagonal-fourier-1}
\begin{aligned}
\int_{\mathcal E(A,B)}
\chi_n\,d\nu_p
\,={}&
pq\Bigl[
\mu_p(A_0)\mu_p(B_0)
-\nu_p(A_0\boxtimes B_0)
-
\mu_p(A_1)\mu_p(B_0)
+
\nu_p(A_1\boxtimes B_0)
-
\mu_p(A_0)\mu_p(B_1)
\\
&
+
\nu_p(A_0\boxtimes B_1)
+
\mu_p(A_1)\mu_p(B_1)
-\nu_p\bigl((A_0\boxtimes B_1)\cup(A_1\boxtimes B_0)\bigr)
\Bigr].
\end{aligned}    
\end{equation}
As in \eqref{eq:influence-factorization}, the terms involving $\mu_p$ factor as the 
product of the influences:
\[
\begin{aligned}
&\mu_p(A_0)\mu_p(B_0)
+\mu_p(A_1)\mu_p(B_1)
-\mu_p(A_1)\mu_p(B_0)
-\mu_p(A_0)\mu_p(B_1)
\\
&\qquad=
\operatorname{Inf}_n(A)\operatorname{Inf}_n(B).
\end{aligned}
\]
After expanding the union in the last term of \eqref{eq:diagonal-fourier-1} 
as in \eqref{eq:PDS expansion},
we recognize that the remaining terms give the probability of the event that
the last coordinate is a symmetric disjoint pivot for the pair \((A,B)\). Indeed,
\[
\begin{aligned}
&\nu_p(A_1\boxtimes B_0)
+\nu_p(A_0\boxtimes B_1)
-\nu_p(A_0\boxtimes B_0)
-\nu_p\Bigl(
(A_1\boxtimes B_0)
\cup
(A_0\boxtimes B_1)
\Bigr)
\\
&=
\nu_p\left(
\bigl((A_1\boxtimes B_0)
\cap
(A_0\boxtimes B_1)\bigr)
\setminus
(A_0\boxtimes B_0)
\right)
\,=\,
\pi_n(A,B).
\end{aligned}
\]
This concludes the proof.
\end{proof}

\noindent
For $p=1/2$, proposition~\ref{prop:diagonal-fourier} reduces to identity
 \eqref{eq:uniform-fourier-identity}.
The orthogonality of the family $\{\chi_1,\ldots,\chi_n\}$ immediately gives a first
bound on \(\|\rho(A,B)\|_2\).

\begin{corollary}
\label{cor:diagonal-bessel}
For all increasing events \(A\) and \(B\), we have
\[
pq\|\rho(A,B)\|_2
\,\leq\,
\sqrt{
D(A,B)\bigl(1-D(A,B)\bigr)
}
\,.
\]
\end{corollary}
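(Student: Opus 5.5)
The plan is to apply Bessel's inequality to the centered indicator of the defect event in \(L^2(\nu_p)\). Write \(f=\mathbf 1_{\mathcal E(A,B)}\) and recall that \(\nu_p(\mathcal E(A,B))=D(A,B)\), since \(A\boxtimes B\subseteq A\times B\) and \(\nu_p(A\times B)=\mu_p(A)\mu_p(B)\). The functions \(\chi_1,\ldots,\chi_n\) defined in \eqref{eq:chi_i} form an orthonormal family in \(L^2(\nu_p)\). Each \(\chi_i\) is also orthogonal to the constant function \(1\), because \(\int\chi_i\,d\nu_p=\bigl(\int r_i\,d\mu_p\bigr)^2=0\), as \(\int r_i\,d\mu_p=(p-p)/\sqrt{pq}=0\).

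First, we note that the family \(\{1,\chi_1,\ldots,\chi_n\}\) is orthonormal. Bessel's inequality applied to \(f-\nu_p(f)\), or equivalently to \(f\) with the constant component removed, gives
\[
\sum_{i=1}^n\Bigl(\int f\chi_i\,d\nu_p\Bigr)^2
\,\leq\,
\int f^2\,d\nu_p-\Bigl(\int f\,d\nu_p\Bigr)^2
\,=\,
D(A,B)-D(A,B)^2 .
\]
Here we used \(f^2=f\), since \(f\) is an indicator function.

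Second, we substitute proposition~\ref{prop:diagonal-fourier}, which identifies \(\int f\chi_i\,d\nu_p=pq\,\rho_i(A,B)\). The left-hand side then becomes \(p^2q^2\|\rho(A,B)\|_2^2\). Taking square roots yields the claim
\[
pq\|\rho(A,B)\|_2\leq\sqrt{D(A,B)\bigl(1-D(A,B)\bigr)} .
\]

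There is no real obstacle in this argument. The only points to check are the normalization of \(\chi_i\) and the orthogonality relations \(\int\chi_i\chi_j\,d\nu_p=\delta_{ij}\) and \(\int\chi_i\,d\nu_p=0\). These follow from the independence of the coordinates under \(\nu_p\) and from \(\int r_i^2\,d\mu_p=(q p^2+p q^2)/(pq)=1\).
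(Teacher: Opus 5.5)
Your proposal is correct and is essentially the paper's proof: you center $\mathbf 1_{\mathcal E(A,B)}$ using $\int\chi_i\,d\nu_p=0$, apply Bessel's inequality to the orthonormal family $\chi_1,\ldots,\chi_n$ to get the bound $D(A,B)\bigl(1-D(A,B)\bigr)$, and then substitute proposition~\ref{prop:diagonal-fourier}. Your explicit checks of the normalization and orthogonality of the $\chi_i$ fill in details the paper only asserts.
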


\begin{proof}
Every \(\chi_i\) has mean zero, and therefore
\[
\int_{\mathcal E(A,B)}
\chi_i\,d\nu_p
\,=\,
\int_{\Omega\times\Omega}
\bigl(
\mathbf 1_{\mathcal E(A,B)}-D(A,B)
\bigr)
\chi_i\,d\nu_p\,.
\]
The functions \(\chi_1,\ldots,\chi_n\) form an orthonormal family.
Bessel's inequality and proposition~\ref{prop:diagonal-fourier} therefore give
\[
\begin{aligned}
p^2q^2\|\rho(A,B)\|_2^2
&\,=\,
\sum_{i=1}^n
\left(
\int_{\mathcal E(A,B)}
\chi_i\,d\nu_p
\right)^2
\,\leq\,
\int_{\Omega\times\Omega}
\bigl(
\mathbf 1_{\mathcal E(A,B)}-D(A,B)
\bigr)^2
\,d\nu_p
\\
&\,=\,
D(A,B)\bigl(1-D(A,B)\bigr)\,.
\end{aligned}
\]
Taking square roots yields the desired result.
\end{proof}

\noindent
Proposition~\ref{prop:exact-section-identity} and
proposition~\ref{prop:diagonal-fourier} provide two different perspectives on
the product space gap. The first identity gives the structure 
of the decomposition along the last coordinate and is suited to Talagrand's
induction method.
The second expresses the Fourier coefficients of $1_{\mathcal E(A,B)}$ as a function
of the influences and the symmetric disjoint pivotal probability, and is
the starting point of the proof of theorem~\ref{thm:SDP-bound}.
\section{Analytic estimates}
\label{sec:analytic-estimates}
In this section, we establish the analytic estimates needed to prove both
theorem~\ref{thm:influence-bound} and theorem~\ref{thm:SDP-bound}. 
\subsection{A biased decoupled estimate}

The estimates of this section are a generalization of
Talagrand's results in \cite{Talagrand1996} to the biased measure.
In \cite{KellerKindler2013}, Keller and Kindler extended Talagrand's work 
to higher Fourier levels, to biased measures and to non-monotone functions.
However, the results which we need for the proof of theorem~\ref{thm:influence-bound} 
are a variant of lemma~14 and theorem~16 in
\cite{KellerKindler2013}. 
In particular, the fact that 
we only consider increasing events
allows us to avoid some 
technical assumptions, and 
slightly simplifies the proof. 
Moreover, we keep track of the probability of the events
in the estimates. This slight improvement could also easily be obtained
with their methods in the increasing case.
For completeness, and because these estimates are central to the
argument, we give all the details of the proofs in the appendix.

We begin with a biased refinement of
Talagrand's lemma~3.1 in \cite{Talagrand1996}. This result is fundamental in
the study of the influences of Boolean functions, and 
is of independent interest.
Recall that the functions
\(r_1,\ldots,r_n\), defined in formula~\eqref{eq:chi_i}, form an orthonormal
family in \(L^2(\mu_p)\). 
We shall also use the biased hypercontractivity
constant \(B(p)\) introduced in \cite{KellerKindler2013}. 
\begin{lemma}
\label{lem:biased-rare-talagrand}
Let \(I\) and \(J\) form a partition of \(\unn\), and let \(A\) be an
increasing event. For a positive real number \(s\), define
\[
L_s(A)
\,=\,
\left\{\, k\in J :
\left(
\sum_{i\in I}\biggl(\int_{\Piv(k,A)} r_i\,d\mu_p\,\biggr)^2
\right)^{1/2}
\,>\,
s\,\Inf_k(A)
\,\right\}\,.
\]
Then
\[
pq
\sum_{k\in L_s(A)}
\Inf_k(A)^2
\,\leq\,
e\,\mu_p(A)
\exp\left(
-\frac{s^2}{8B(p)e}
\right)\,.
\]
\end{lemma}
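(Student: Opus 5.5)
The plan is to turn the hypothesis $k\in L_s(A)$ into an exponentially small upper bound on $\Inf_k(A)$, and then to sum these bounds against a Bessel-type control of $\sum_k\Inf_k(A)^2$ by $\mu_p(A)$. The analytic engine is biased hypercontractivity with the Keller--Kindler constant $B(p)$. For the noise operator $T_\rho$ on $L^2(\mu_p)$ and any event $E$, it gives
\[
\|T_\rho\mathbf 1_E\|_2\le\|\mathbf 1_E\|_{1+\rho^2/B(p)}=\mu_p(E)^{1/(1+\rho^2/B(p))}.
\]
The level-one part of $T_\rho\mathbf 1_E$ is $\rho\sum_i(\int_E r_i\,d\mu_p)\,r_i$, so this yields, for $\theta=\frac{\rho^2/B(p)}{1+\rho^2/B(p)}$,
\[
\sum_{i\in I}\Bigl(\int_E r_i\,d\mu_p\Bigr)^2\le\rho^{-2}\mu_p(E)^{2-2\theta}.
\]

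First I will record the Fourier meaning of the quantities involved. By conditioning on coordinate $k$, exactly as in \eqref{eq:pivotality-identity-2} but with the biased $r_k$, one gets $\int_A r_k\,d\mu_p=\sqrt{pq}\,\Inf_k(A)$. Bessel's inequality for the orthonormal family $(r_k)$ then gives
\[
pq\sum_k\Inf_k(A)^2\le\mu_p(A).
\]
In the same way, for $i\neq k$,
\[
\int_A r_ir_k\,d\mu_p=\sqrt{pq}\int_{\Piv(k,A)}r_i\,d\mu_p,
\]
so the vector $a_k=(\int_{\Piv(k,A)}r_i\,d\mu_p)_{i\in I}$ consists of level-two coefficients of $\mathbf 1_A$. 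Monotonicity of $A$ enters through \eqref{eq:pivotality-identity-1}: $\Piv(k,A)$ is a genuine event of probability $\Inf_k(A)$, so the hypercontractive bound applies to it directly.

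Second, I apply the level-one bound to $E=\Piv(k,A)$. This gives $|a_k|^2\le\rho^{-2}\Inf_k(A)^{2-2\theta}$. If $k\in L_s(A)$, then also $|a_k|^2>s^2\Inf_k(A)^2$. Comparing the two yields $\Inf_k(A)^{2\theta}<(s\rho)^{-2}$, and optimizing over $\rho$ (taking $\rho^2$ of order $1/s^2$, up to factors of $e$) gives a bound of the form
\[
\Inf_k(A)\le\exp\bigl(-s^2/(c\,B(p))\bigr)\qquad\text{for }k\in L_s(A).
\]

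Third, and this is the step I expect to be the main obstacle, I must combine the pointwise smallness with the Bessel bound. Naively this gives only $\sum_{L}\Inf_k^2\le\max_L\Inf_k\cdot\sum_L\Inf_k$, and $\sum_k\Inf_k(A)$ is not controlled by $\mu_p(A)$. What I would try first is to avoid the pointwise route and mix the two steps:
\begin{itemize}
\item use the level-two identity to write $pq\sum_{k\in L}\Inf_k^2<s^{-2}\sum_{k\in L}\sum_{i\in I}\widehat{\mathbf 1_A}(\{i,k\})^2$;
\item bound the right-hand side by hypercontractivity applied to $\mathbf 1_A$ itself, paying $\rho^{-4}\mu_p(A)^{2-2\theta}$;
\item in parallel, use the per-$k$ inequality $\Inf_k^{2\theta}<(s\rho)^{-2}$ to trade powers of $\Inf_k$ for powers of $(s\rho)^{-1}$, so that the sum over $L$ is handled by Bessel with a gain.
\end{itemize}
Concretely I will write $\Inf_k^2=\Inf_k^{2-2\theta}\Inf_k^{2\theta}$ and sum the first factor through the hypercontractive level-one inequality for $\mathbf 1_A$ in the direction $(r_k)_{k\in L}$. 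The choice $\rho^2=B(p)\cdot\frac{4}{s^2}$ scaled by $e$ is the one I expect to produce the constants $e\,\mu_p(A)$ and $\exp(-s^2/(8eB(p)))$. If this bookkeeping fails, the fallback is to follow Keller--Kindler's proof of their lemma~14 verbatim, restricted to monotone $A$ where their extra assumptions are automatic.
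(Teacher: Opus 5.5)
Your Step 3 does not close, and the obstacle you flag there is exactly where the key idea is missing. Neither of your devices can put a factor $\exp(-s^2/(8B(p)e))$ in front of $\mu_p(A)$.

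The level-two route gives at best $pq\sum_{k\in L_s(A)}\Inf_k(A)^2<s^{-2}\sum_{|S|=2}\widehat{\mathbf 1_A}(S)^2\le s^{-2}\mu_p(A)$. Hypercontractivity improves the level-two weight only when $\mu_p(A)$ is small, so for $\mu_p(A)$ of order one you are left with decay $s^{-2}$.

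The trade $\Inf_k^2=\Inf_k^{2-2\theta}\Inf_k^{2\theta}$ fails twice over. The level-one inequality for $\mathbf 1_A$ controls $\sum_k\Inf_k(A)^2$, not $\sum_k\Inf_k(A)^{2-2\theta}$; for majority the latter is of order $n^{\theta}$. And the factor the trade gains, $(s\rho)^{-2}$ with $\rho\le 1$, is again only polynomial in $s$.

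More fundamentally, a pointwise bound on $\Inf_k(A)$ extracted from the event $\Piv(k,A)$ in isolation says nothing that ties the coordinates of $L_s(A)$ to $\mu_p(A)$, so it cannot be summed. Note that your plan never uses the partition $I\sqcup J$ in an essential way.

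A secondary problem: your inequality $\|T_\rho\mathbf 1_E\|_2\le\|\mathbf 1_E\|_{1+\rho^2/B(p)}$ has the $B(p)$-dependence backwards. At $\rho=1$ it reads $\|\mathbf 1_E\|_2\le\|\mathbf 1_E\|_{1+1/B(p)}$. This is false for every nontrivial $E$ when $p\neq 1/2$, since then $B(p)>1$. More generally, testing $f=1+\varepsilon r_1$ shows that $\|T_\rho f\|_2\le\|f\|_r$ forces $r\ge 1+\rho^2$ for every bias.

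The missing idea is to condition on $\mathcal F_I$, the sigma-field of the coordinates in $I$. For $k\in J$ put $h_k=\E[\mathbf 1_{\Piv(k,A)}\mid\mathcal F_I]$, so that $\Inf_k(A)=\E[h_k]$.

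Monotonicity gives $\E[\mathbf 1_A r_k\mid\mathcal F_I]=\sqrt{pq}\,h_k$. Since the $r_k$, $k\in J$, remain orthonormal conditionally on $\mathcal F_I$, a conditional Bessel inequality yields $pq\sum_{k\in J}\E[h_k^2]\le\mu_p(A)$. This, rather than $pq\sum_k\Inf_k(A)^2\le\mu_p(A)$, is the summable quantity.

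Because the $r_i$, $i\in I$, are $\mathcal F_I$-measurable, $k\in L_s(A)$ means $\E[G_k h_k]>s\,\E[h_k]$ for a unit-norm linear form $G_k=\sum_{i\in I}c_i r_i$. Discarding $\{|G_k|\le s/2\}$ and applying Cauchy--Schwarz gives $\Inf_k(A)^2\le 4s^{-2}\,\E[G_k^2\mathbf 1_{\{|G_k|>s/2\}}]\,\E[h_k^2]$.

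Now use the degree-one tail bound $\mu_p(|G|\ge t)\le\exp(-t^2/(2B(p)e))$. This is Keller--Kindler at degree one, and it is the only hypercontractive input needed. It gives $4s^{-2}\E[G_k^2\mathbf 1_{\{|G_k|>s/2\}}]\le 2\exp(-s^2/(8B(p)e))$ once $s\ge 2\sqrt{2B(p)e}$.

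Summing over $k\in L_s(A)$ against the conditional Bessel bound gives the lemma. For smaller $s$ the Bessel bound alone suffices, which is where the factor $e$ comes from. The point is that the per-coordinate estimate must be relative to $\E[h_k^2]$, not an absolute bound on $\Inf_k(A)$.
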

\noindent
From this lemma, we derive the biased decoupled estimate. For two increasing events
\(A\) and \(B\), denote their total influence by
\[
\mathcal I(A,B)
\,=\,
\sum_{k=1}^n
\Inf_k(A)\Inf_k(B)
\,.
\]
\begin{theorem}
\label{thm:rare-biased-decoupled}
For any two increasing events \(A\) and \(B\) on the discrete cube,
\[
\sum_{k=1}^n\sum_{i\neq k}
\left|
\left(\int_{\Piv(k,A)} r_i\,d\mu_p\right)
\left(\int_{\Piv(k,B)} r_i\,d\mu_p\right)
\right|
\\
\leq
32eB(p)\,\mathcal I(A,B)
\log\left(
\frac{
4e^{3/2}\sqrt{\mu_p(A)\mu_p(B)}
}{
pq\,\mathcal I(A,B)
}
\right).
\]
We use the convention that the right-hand side is zero when \(\mathcal I(A,B)=0\).
\end{theorem}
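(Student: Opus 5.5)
We derive Theorem~\ref{thm:rare-biased-decoupled} from Lemma~\ref{lem:biased-rare-talagrand} by the standard decoupling argument of Talagrand \cite{Talagrand1996}.

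\emph{Step 1: reduction to a partition.} For each $k$ set
\[
a_{k,i}=\int_{\Piv(k,A)} r_i\,d\mu_p,\qquad b_{k,i}=\int_{\Piv(k,B)} r_i\,d\mu_p .
\]
Choose a uniformly random partition $(I,J)$ of $\unn$, each coordinate independently in $I$ or $J$ with probability $1/2$. For $i\neq k$, the event $\{k\in J,\ i\in I\}$ has probability $1/4$. Hence the left-hand side equals
\[
4\,\E_{I,J}\sum_{k\in J}\sum_{i\in I}|a_{k,i}b_{k,i}| .
\]
It therefore suffices to bound, for a fixed partition, the quantity
\[
S=\sum_{k\in J}\sum_{i\in I}|a_{k,i}b_{k,i}|
\]
by $8eB(p)\,\mathcal I\log(\cdots)$. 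Write
\[
\alpha_k=\Bigl(\sum_{i\in I}a_{k,i}^2\Bigr)^{1/2},\qquad \beta_k=\Bigl(\sum_{i\in I}b_{k,i}^2\Bigr)^{1/2}.
\]
By Cauchy--Schwarz, $S\le\sum_{k\in J}\alpha_k\beta_k$.

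\emph{Step 2: layer-cake decomposition.} We bound $\alpha_k\le s_k\Inf_k(A)$ and $\beta_k\le t_k\Inf_k(B)$, where $s_k$ is the smallest dyadic level (multiple of a base scale $s_0$) not exceeded by $\alpha_k/\Inf_k(A)$, and similarly for $t_k$. Then
\[
\alpha_k\beta_k\le\Inf_k(A)\Inf_k(B)\,s_kt_k\le\tfrac12\Inf_k(A)\Inf_k(B)(s_k^2+t_k^2).
\]
We split into the part where $s_k\le s_0$ and the part where $s_k>s_0$. The first part contributes at most $s_0^2\,\mathcal I$, up to the symmetric $t$-term. For the second part, use the trivial bound $\alpha_k\le\|1_{\Piv(k,A)}\|_2=\Inf_k(A)^{1/2}$, which is available since the $r_i$ are orthonormal (Bessel). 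This ensures $s_k$ is a priori finite.

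Rather than square the levels, the cleaner route is as follows. Summing over $k\in L_s(A)$ and using Cauchy--Schwarz in $k$ gives
\[
\sum_{k\in L_s(A)}\Inf_k(A)\Inf_k(B)\le\Bigl(\sum_{L_s(A)}\Inf_k(A)^2\Bigr)^{1/2}\Bigl(\sum_k\Inf_k(B)^2\Bigr)^{1/2}.
\]
Lemma~\ref{lem:biased-rare-talagrand} then controls the first factor by
\[
\Bigl(\tfrac{e\mu_p(A)}{pq}\Bigr)^{1/2}e^{-s^2/(16B(p)e)} .
\]
There is a subtlety here: the second factor should be expressed through $\mathcal I$, not $\sum\Inf_k(B)^2$. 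We handle this by applying the layer-cake argument to the pairs jointly. Bound
\[
\alpha_k\beta_k\le\Inf_k(A)\Inf_k(B)\max(s,t)^2\quad\text{on } \{k:\ \alpha_k\le s\Inf_k(A),\ \beta_k\le t\Inf_k(B)\},
\]
and use the trivial bound $\alpha_k\beta_k\le(\Inf_k(A)\Inf_k(B))^{1/2}$ on the complementary set.

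\emph{Step 3: choosing the threshold.} We integrate
\[
\sum_k\Inf_k(A)\Inf_k(B)\,\frac{\alpha_k\beta_k}{\Inf_k(A)\Inf_k(B)}
\]
by writing $u_k=\alpha_k\beta_k/(\Inf_k(A)\Inf_k(B))$ and splitting at a threshold $s^2$. The terms with $u_k\le s^2$ give at most $s^2\mathcal I$. For the terms with $u_k>s^2$, either $\alpha_k>s\Inf_k(A)$ or $\beta_k>s\Inf_k(B)$. In the first case ($k\in L_s(A)$) we use
\[
\alpha_k\beta_k\le\alpha_k\,\Inf_k(B)^{1/2}\le\Inf_k(A)^{1/2}\Inf_k(B)^{1/2},
\]
and then apply Cauchy--Schwarz together with the AM--GM-type bound
\[
\sum_{L_s(A)}(\Inf_k(A)\Inf_k(B))^{1/2}\le\Bigl(\sum_{L_s(A)}\Inf_k(A)^2\Bigr)^{1/4}\cdots .
\]
This is the step I expect to be the main obstacle. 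Matching exponents so that the tail becomes $\sqrt{\mu_p(A)\mu_p(B)}/(pq)$ times $e^{-cs^2/B(p)}$, with no stray $\sum\Inf^2$, likely requires a dyadic summation over levels $s=2^{j/2}s_0$ as in Talagrand's proof, summing
\[
\sum_j 2^{j}s_0^2\,\sum_{k\in L_{2^{(j-1)/2}s_0}(A)\cap L_{2^{(j-1)/2}s_0}(B)^c}\cdots,
\]
with Cauchy--Schwarz applied between the $A$ and $B$ tail sums given by Lemma~\ref{lem:biased-rare-talagrand}. Finally we optimize by taking
\[
s^2\approx 16eB(p)\log\Bigl(\frac{C\sqrt{\mu_p(A)\mu_p(B)}}{pq\,\mathcal I}\Bigr),
\]
so that the tail is of order $\mathcal I$ and the total is $\lesssim B(p)\,\mathcal I\log(\cdot)$. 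Tracking constants through this choice yields $32eB(p)$ and the constant $4e^{3/2}$ in the logarithm. The case $\mathcal I=0$ is trivial: then every $k$ has $\Inf_k(A)\Inf_k(B)=0$, and Piv being empty forces the corresponding $a_{k,\cdot}$ or $b_{k,\cdot}$ to vanish.
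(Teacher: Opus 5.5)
Your random-partition reduction and the use of Lemma~\ref{lem:biased-rare-talagrand} are the right ingredients, but the central tail estimate is never completed, and the routes you sketch for it do not close. The ``subtlety'' you raise is not one. Since $\int_B r_k\,d\mu_p=\sqrt{pq}\,\Inf_k(B)$, Bessel's inequality gives $pq\sum_k\Inf_k(B)^2\le\mu_p(B)$. So your own Cauchy--Schwarz line, combined with the lemma, already yields
\[
pq\sum_{k\in L_s(A)}\Inf_k(A)\Inf_k(B)\,\le\,\sqrt e\,\sqrt{\mu_p(A)\mu_p(B)}\,e^{-s^2/(16eB(p))},
\]
which is exactly the tail shape you want. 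Nothing needs to be expressed through $\mathcal I$.

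The problem is that you then split at a single threshold and try to bound $\alpha_k\beta_k$ itself on $\{u_k>s^2\}$. The pointwise bound $\alpha_k\beta_k\le(\Inf_k(A)\Inf_k(B))^{1/2}$ discards the information you need. The lemma controls only $\sum_{L_s(A)}\Inf_k(A)^2$, and the H\"older step you start leaves a factor $(\sum_k\Inf_k(B)^{2/3})^{3/4}$, which is already of order $\sqrt n$ for majority. Your dyadic variant, as written, applies the lemma to both events on $L_s(A)\cap L_s(B)^c$, where it says nothing about $B$.

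The missing device is the layer-cake identity in the level variable:
\[
\sum_{k\in J}\alpha_k\beta_k\,=\,\int_0^\infty\sum_{k\in J:\,u_k>t}\Inf_k(A)\Inf_k(B)\,dt,
\qquad
\{u_k>t\}\subseteq L_{\sqrt t}(A)\cup L_{\sqrt t}(B).
\]
With it, only the level-set mass has to be estimated, and no a priori bound on $u_k$ is needed. That mass is at most $I_J=\sum_{k\in J}\Inf_k(A)\Inf_k(B)$. By the first display applied to $A$ and to $B$, it is also at most $2\sqrt e\,\sqrt{\mu_p(A)\mu_p(B)}\,e^{-t/(16eB(p))}/(pq)$. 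Split the integral at $t_0=16eB(p)\log\bigl(2\sqrt e\,\sqrt{\mu_p(A)\mu_p(B)}/(pq\,I_J)\bigr)$, which is positive because $pq\,I_J\le\sqrt{\mu_p(A)\mu_p(B)}$ by Bessel. This gives
\[
S\,\le\,16eB(p)\,I_J\log\left(\frac{2e^{3/2}\sqrt{\mu_p(A)\mu_p(B)}}{pq\,I_J}\right).
\]

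Separately, the constants you announce do not follow from the reduction as you set it up. For a fixed partition the natural bound is in terms of $I_J$, with constant $16eB(p)$, not $8eB(p)\,\mathcal I$. Bounding $I_J\le\mathcal I$ and multiplying by $4$ only gives $64eB(p)\,\mathcal I\log\bigl(2e^{3/2}\sqrt{\mu_p(A)\mu_p(B)}/(pq\,\mathcal I)\bigr)$, which is strictly weaker than the statement. The paper recovers the factor $2$ by keeping $I_J$, using $\E[I_J]=\mathcal I/2$, and applying Jensen's inequality to the concave map $x\mapsto x\log\bigl(2e^{3/2}\sqrt{\mu_p(A)\mu_p(B)}/(pq\,x)\bigr)$, which vanishes at $0$. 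This step is also where the constant $4e^{3/2}$ inside the logarithm comes from.
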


\noindent
We shall also use the following elementary properties of the function
\(\varphi\) which appears in theorem~\ref{thm:influence-bound}.
Recall that \(\varphi(x) = x/\log(e/x))\).

\begin{lemma}
\label{lem:phi}
Let
\[
F(R,s)
=
R\varphi\left(\frac{s}{R}\right)
=
\frac{s}{\log(eR/s)},
\qquad
0\leq s\leq R\leq 1.
\]
Then the following properties hold:
\begin{enumerate}
    \item[(i)] The map \(F\) is jointly convex on the domain
    \(\{(R,s):0\leq s\leq R\leq 1\}\).

    \item[(ii)] For every \(R\in[0,1]\), we have \(F(R,0)=0\).

    \item[(iii)] The map \(F\) is non-increasing in its first variable and
    non-decreasing in its second variable.

    \item[(iv)] For every fixed \(R>0\), the map
    \[
    u
    \longmapsto
    R\varphi\left(\frac{pq\,u}{R}\right)
    \]
    is increasing and convex on \(0\leq u\leq R/pq\).

    \item[(v)] For every \(R>0\) and every $u,v$ such that 
    \(0\leq u<v\leq R/pq\),
    we have
    \[
    R\varphi\left(\frac{pq\,v}{R}\right)
    \leq
    R\varphi\left(\frac{pq\,u}{R}\right)
    +
    \frac{
    2pq\,(v-u)
    }{
    \log\left(eR/(pq\,v)\right)
    }.
    \]
\end{enumerate}
\end{lemma}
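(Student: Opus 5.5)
The plan is to reduce everything to two facts about the one–variable function \(\varphi(x)=x/\log(e/x)=x/(1-\log x)\) on \([0,1]\): it is non-decreasing and convex. Setting \(t=\log(e/x)=1-\log x\geq 1\), a direct differentiation gives
\[
\varphi'(x)\,=\,\frac{(1-\log x)+1}{(1-\log x)^2}\,=\,\frac{t+1}{t^2}\,>\,0 .
\]
As \(x\) increases, \(t\) decreases, and \(t\mapsto (t+1)/t^2\) is decreasing on \([1,\infty[\). Hence \(\varphi'\) is increasing in \(x\), so \(\varphi\) is convex on \(]0,1]\). Since \(\varphi(x)\to0\) as \(x\to0\), \(\varphi\) is continuous at \(0\), and convexity and monotonicity extend to \([0,1]\). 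I will also record the bound \(\varphi'(x)=(t+1)/t^2\leq 2/t\), valid because \(t\geq1\).

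For (i), observe that \(F(R,s)=R\varphi(s/R)\) is the perspective of the convex function \(\varphi\). The perspective of a convex function is jointly convex on \(\{R>0\}\). To verify this, write \((R,s)=\lambda(R_1,s_1)+(1-\lambda)(R_2,s_2)\). Then \(s/R\) is the convex combination of \(s_1/R_1\) and \(s_2/R_2\) with weights \(\lambda R_1/R\) and \((1-\lambda)R_2/R\), and Jensen's inequality applies. The only delicate point is the boundary \(R=0\), where necessarily \(s=0\). There I will set \(F(0,0)=0\) and use \(0\leq F(R,s)\leq s\) (since \(\log(eR/s)\geq1\)) to get continuity at \((0,0)\). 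Convexity then passes to the closed domain by a limiting argument. I expect this boundary and continuity bookkeeping to be the only real obstacle, and it is mild.

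Property (ii) is immediate from \(\varphi(0)=0\).

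For (iii), monotonicity in \(s\) follows from \(\varphi\) being non-decreasing. Monotonicity in \(R\) is clearest from the expression \(s/\log(eR/s)\): for fixed \(s>0\) the denominator increases with \(R\), and for \(s=0\) the function is identically zero.

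For (iv), the map \(u\mapsto R\varphi(pq\,u/R)\) is the composition of the convex non-decreasing function \(\varphi\) with an increasing linear map whose range lies in \([0,1]\) when \(0\leq u\leq R/pq\). It is therefore increasing and convex. Strict increase comes from \(\varphi'>0\).

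For (v), set \(f(u)=R\varphi(pq\,u/R)\). Since \(f\) is convex, the tangent at \(v\) lies below the graph, which gives \(f(u)\geq f(v)+f'(v)(u-v)\), that is, \(f(v)\leq f(u)+f'(v)(v-u)\). Here \(f'(v)=pq\,\varphi'(pq\,v/R)\leq 2pq/\log\bigl(eR/(pq\,v)\bigr)\) by the bound on \(\varphi'\) recorded above, which is exactly the claimed inequality. At the endpoint \(v=R/pq\) the logarithm equals \(1\) and the left derivative is used, with the same bound.
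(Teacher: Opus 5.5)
Your proposal is correct and follows essentially the same route as the paper: convexity of $\varphi$ from an explicit derivative computation, joint convexity of the perspective $R\varphi(s/R)$, and for (v) the bound $\varphi'(x)=(t+1)/t^2\le 2/t$ with $t=\log(e/x)\ge 1$. The differences are cosmetic. You obtain (v) from the supporting line of the convex function at $v$, whereas the paper integrates the derivative bound over $[u,v]$ using $\log\bigl(eR/(pq\,t)\bigr)\ge\log\bigl(eR/(pq\,v)\bigr)$. You also treat the boundary point $(0,0)$ explicitly, which the paper leaves implicit.
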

\noindent
The proof is purely technical, and is given in the appendix.
\subsection{An estimate for diagonal coefficients}

We now adapt proposition~2.2 of Talagrand
\cite{Talagrand1996} to the biased product cube.
We apply his argument to the variables \(\chi_1,\ldots,\chi_n\),
which are independent, bounded and
centered under \(\nu_p\).
% Define the constant
% \[
% M_p
% \,\coloneqq\, 
% \lVert \chi_i \rVert_\infty
% \,=\,
% \max\left(\frac{p}{q},\frac{q}{p}\right)
% \]

\begin{proposition}
\label{prop:first-level}
For every event \(E\) on the product cube,
\[
\left(
\sum_{i=1}^n
\left(
\int_E \chi_i\,d\nu_p
\right)^2
\right)^{1/2}
\,\leq\,
3\max\left(\frac{p}{q},\frac{q}{p}\right)
\nu_p(E)
\sqrt{
\log\left(
\frac{2}{\nu_p(E)}
\right)
}\,.
\]
\end{proposition}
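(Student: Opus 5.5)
The plan is to follow Talagrand's proof of his proposition~2.2 by duality and a Gaussian-type tail bound. Set \(a_i=\int_E\chi_i\,d\nu_p\) and \(a=(a_1,\dots,a_n)\). If \(a=0\) there is nothing to prove. Otherwise let \(\theta_i=a_i/\|a\|_2\), so that \(\sum_i\theta_i^2=1\), and consider the linear form
\[
S\,=\,\sum_{i=1}^n\theta_i\chi_i\,,
\qquad
\|a\|_2\,=\,\int_E S\,d\nu_p\,.
\]
The task is then to bound \(\int_E S\,d\nu_p\) by \(C\nu_p(E)\sqrt{\log(2/\nu_p(E))}\) uniformly over events \(E\). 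Among all sets of measure \(\nu_p(E)\), the integral is largest on an upper level set of \(S\). So it suffices to have a sub-Gaussian tail for \(S\):
\[
\nu_p(S\geq t)\,\leq\,\exp\!\bigl(-t^2/(2M^2)\bigr),
\qquad
M\,=\,\max\Bigl(\frac pq,\frac qp\Bigr).
\]

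This tail bound should follow from Hoeffding's lemma. The \(\chi_i\) are independent and centered under \(\nu_p\), being products of independent centered \(r_i(x)\) and \(r_i(y)\). Each \(\chi_i\) takes values in \([-1,M]\), by the list of values computed in the proof of proposition~\ref{prop:diagonal-fourier}. Hoeffding gives
\[
\E\bigl[e^{\lambda\theta_i\chi_i}\bigr]\,\leq\,e^{\lambda^2\theta_i^2(M+1)^2/8}\,\leq\,e^{\lambda^2\theta_i^2M^2/2},
\]
using \(M+1\leq 2M\) since \(M\geq1\). Multiplying over \(i\) gives \(\E[e^{\lambda S}]\leq e^{\lambda^2M^2/2}\), and Chernoff yields the tail above.

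Next comes the layer-cake step. Write \(m=\nu_p(E)\) and fix \(t_0\geq0\). Then
\[
\int_E S\,d\nu_p\,\leq\,\int_E S_+\,d\nu_p
\,\leq\, t_0\,m+\int_{t_0}^\infty \nu_p(S\geq t)\,dt
\,\leq\, t_0m+\int_{t_0}^\infty e^{-t^2/(2M^2)}dt .
\]
Choose \(t_0=M\sqrt{2\log(1/m)}\). The Gaussian tail integral is then at most \(\frac{M^2}{t_0}e^{-t_0^2/(2M^2)}=\frac{Mm}{\sqrt{2\log(1/m)}}\). This handles the case where \(\log(1/m)\) is not too small. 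When \(m\) is close to \(1\), I would use a cruder bound instead. Since \(\int_E S=-\int_{E^c}S\), Cauchy--Schwarz together with \(\E[S^2]=1\) gives \(\int_E S\leq\sqrt{m(1-m)}\leq\sqrt m\). Both regimes then have to be absorbed into \(3Mm\sqrt{\log(2/m)}\); the \(\log 2\) inside the logarithm is exactly what makes this possible near \(m=1\).

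The main obstacle is this final bookkeeping: producing the explicit constant \(3\) uniformly over \(m\in]0,1]\). I would split at a threshold such as \(m\leq e^{-1}\). Below it, \(t_0m+Mm/\sqrt{2\log(1/m)}\leq (\sqrt2+1/\sqrt2)Mm\sqrt{\log(2/m)}\), which is already under \(3\). Above it, \(\sqrt{m}\leq 3Mm\sqrt{\log 2}\) holds because \(m\geq e^{-1}\) and \(M\geq1\). One could alternatively optimize \(t_0=M\sqrt{2\log(2/m)}\) directly to avoid the case split. The probabilistic content, namely independence and boundedness of the \(\chi_i\), is routine; everything else is constant-chasing.
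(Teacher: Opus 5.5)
Your proposal is correct and follows the paper's proof essentially step by step. You normalize the coefficient vector, apply Hoeffding's lemma to the independent centered $\chi_i$ to get a sub-Gaussian tail with variance proxy $M_p^2$, and truncate the layer-cake integral over $E$ at a level of order $M_p\sqrt{2\log(\cdot/\nu_p(E))}$. The only difference is bookkeeping: the paper uses the two-sided tail $2e^{-r^2/(2M_p^2)}$ for $|G|$ and truncates directly at $M_p\sqrt{2\log(2/\nu_p(E))}$. The factor $2$ then cancels, so no case split or Cauchy--Schwarz bound near $\nu_p(E)=1$ is needed; this is exactly the alternative you mention at the end.
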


\begin{proof}
We follow the argument of Talagrand \cite{Talagrand1996}.
The result is immediate when \(\nu_p(E)=0\). We therefore assume that
\(\nu_p(E)>0\). For every coordinate \(i\), define 
\[
c_i
\,=\,
\int_E
\chi_i\,d\nu_p
\]
If \(\sum_{i=1}^n c_i^2 = 0\),
then the left-hand side of the desired estimate vanishes, and there is
nothing to prove. We may therefore assume that \(\sum_{i=1}^n c_i^2 > 0\).
Define the normalized coefficients
\[
\alpha_i
\,=\,
\frac{c_i}{
\left(
\sum_{j=1}^n c_j^2
\right)^{1/2}
}
\,,
\]
and set \(G = \sum_{i=1}^n \alpha_i\chi_i\).
We then have
\begin{equation}
\label{eq:normalizeed-coefficients-first-level}
\sum_{i=1}^n \alpha_i^2
\,=\,
1
\,,
\qquad
\int_E G\,d\nu_p
\,=\,
\sum_{i=1}^n \alpha_i c_i
\,=\,
\left(
\sum_{i=1}^n c_i^2
\right)^{1/2}
\,.
\end{equation}
The random variables \(\chi_1,\ldots,\chi_n\) are independent and
centered, and each of them is bounded in absolute value
by 
\begin{equation}
\label{eq:def_MP}
M_p \, \coloneqq \, \max\left(\frac{p}{q},\frac{q}{p}\right)\,.
\end{equation}
Let us fix $\lambda\in\mathbb R$. Hoeffding's lemma gives that
\begin{equation}
\label{eq:hoeffding}
\mathbb E_{\nu_p}\!\left[
\exp\left(\lambda\alpha_i\chi_i\right)
\right]
\,\leq\,
\exp\left(
\frac{\lambda^2\alpha_i^2M_p^2}{2}
\right)
\,.
\end{equation}
Using independence, the fact that \(\sum_{i=1}^n\alpha_i^2=1\) and
taking the product over all \(i\) in \eqref{eq:hoeffding}, we obtain
\begin{equation}
\label{eq:sum-hoeffding}
\mathbb E_{\nu_p}\!\left[e^{\lambda G}\right]
\,\leq\,
\exp\left(
\frac{\lambda^2M_p^2}{2}
\right)
\,.
\end{equation}
The exponential Markov inequality therefore yields
\[
\forall 
r>0,
\forall 
\lambda>0,
\quad
\nu_p(G>r)
\,\leq\,
\exp\left(
\frac{\lambda^2M_p^2}{2}-\lambda r
\right)
\,.
\]
Optimizing over \(\lambda\), and applying the same estimate to \(-G\),
gives
\[
\nu_p\bigl(|G|>r\bigr)
\,\leq\,
2\exp\left(
-\frac{r^2}{2M_p^2}
\right)
\,.
\]
We choose the truncation level
\(r_0=M_p\sqrt{2\log\left(\frac{2}{\nu_p(E)}\right)}.\)
By Tonelli's theorem,
\begin{equation}
\int_E |G|\,d\nu_p
\,=\,
\int_0^\infty
\nu_p\bigl(E\cap\{\,|G|>r\,\}\bigr)\,dr
\,.
\end{equation}
Splitting the integral at \(r_0\), we obtain
\begin{equation}
\label{eq:truncation}
\begin{aligned}
\int_E |G|\,d\nu_p
&\,\leq\,
r_0\,\nu_p(E)
+
\int_{r_0}^\infty
\nu_p\bigl(|G|>r\bigr)\,dr
\\
&\,\leq\,
r_0\,\nu_p(E)
+
2\int_{r_0}^\infty
\exp\left(
-\frac{r^2}{2M_p^2}
\right)\,dr
\,.
\end{aligned}
\end{equation}
The standard Gaussian tail estimate gives
\[
\int_{r_0}^{\infty}
\exp\left(
-\frac{r^2}{2M_p^2}
\right)\,dr
\,\leq\,
\frac{M_p^2}{r_0}
\exp\left(
-\frac{r_0^2}{2M_p^2}
\right)
\,.
\]
Using the definition of \(r_0\), the inequality 
\eqref{eq:truncation} then becomes
\begin{equation}
\label{eq:G-bound}
\begin{aligned}
\int_E|G|\,d\nu_p
\,\leq\,&
M_p\nu_p(E)
\left[
\sqrt{
2\log\left(
\frac{2}{\nu_p(E)}
\right)
}
+
\frac{1}{
\sqrt{
2\log\left(
2/\nu_p(E)
\right)
}
}
\right]
\,.
\\
&\leq
3M_p\nu_p(E)
\sqrt{
\log
\left(
\frac{2}{\nu_p(E)}
\right)}
\end{aligned}
\end{equation}
The desired result follows from
\eqref{eq:normalizeed-coefficients-first-level}
and \eqref{eq:G-bound}.
\end{proof}

\noindent
Finally, we present a reformulation of proposition~\ref{prop:first-level},
which will be the main tool in the proof of theorem~\ref{thm:SDP-bound}.
\begin{lemma}
\label{lem:first-level-inversion}
Let \(M \geq 1\), \(0<z\leq R \leq 1\), and let \(0<d \leq R^2\). Assume that
\[
z
\,\leq\,
3Md
\sqrt{
\log\left(
\frac{2}{d}
\right)
}
\,.
\]
Then,
\[
d
\,\geq\,
\frac{z}{
12M
\sqrt{
\log\left(
\dfrac{6MR}{z}
\right)
}
}
\,.
\]
\end{lemma}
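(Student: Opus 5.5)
The plan is to use the hypothesis twice: once with the crude upper bound \(d\leq R^2\) to show that \(1/z\) is controlled by a power of \(6MR/z\), and once more to invert the inequality. Write
\[
g(t)\,=\,t\sqrt{\log(2/t)}\,,\qquad L\,=\,\log\left(\frac{6MR}{z}\right).
\]
Since \(z\leq R\) and \(M\geq1\), we have \(L\geq\log 6>1\). A one-line derivative computation shows that \(g\) is increasing on \(]0,2e^{-1/2}]\), which contains \(]0,1]\).

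\textbf{Step 1 (removing the dependence on \(1/z\)).} Since \(d\leq R^2\leq1\), monotonicity of \(g\) gives
\[
z\,\leq\,3Mg(d)\,\leq\,3MR^2\sqrt{\log(2/R^2)}\,.
\]
Dividing by \(zR\) yields
\[
\frac1R\,\leq\,\frac{3MR}{z}\sqrt{\log(2/R^2)}\,.
\]
Using \(\sqrt{\log(2/R^2)}\leq C\,(2/R^2)^{\delta}\) for a small \(\delta\) (say \(\delta=1/8\)), I absorb the logarithm into a power of \(1/R\). This should give \(1/R\leq C'(6MR/z)^{a}\) for some absolute \(a\leq 2\). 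Consequently
\[
\log\frac{1}{z}\,=\,\log\frac{1}{R}+\log\frac{R}{z}\,\leq\,cL
\]
for an explicit absolute constant \(c\), and \(\log(M)\leq L\) as well.

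\textbf{Step 2 (inversion).} From the hypothesis, \(d\geq z/\bigl(3M\sqrt{\log(2/d)}\bigr)\). It therefore suffices to show that
\[
\log(2/d)\,\leq\,16L\,.
\]
To bound \(\log(2/d)\), I apply the hypothesis once more with the crude estimate \(\sqrt{\log(2/d)}\leq C/\sqrt d\), valid because \(t\log(2/t)\) is bounded on \(]0,1]\). This gives \(z\leq 3MC\sqrt d\), hence
\[
\frac1d\,\leq\,\frac{9M^2C^2}{z^2}\,,\qquad
\log(2/d)\,\leq\,2\log\frac1z+2\log M+O(1)\,.
\]
By Step 1 this is at most \(c''L\). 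A cleaner alternative is to argue by contradiction. Assume \(d<d_*\coloneqq z/\bigl(12M\sqrt L\bigr)\). Then monotonicity gives
\[
z\,\leq\,3Mg(d_*)\,=\,\frac{z}{4\sqrt L}\sqrt{\log\bigl(24M\sqrt L/z\bigr)}\,,
\]
so it is enough to check that \(\log\bigl(24M\sqrt L/z\bigr)<16L\), which again follows from Step 1 together with \(L\geq\log 6\).

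\textbf{Main obstacle.} I expect the difficulty to be in Step 1, namely keeping constants explicit enough to land on exactly \(12\) and \(6\). The bound \(\log(1/z)\lesssim L\) is only true thanks to the constraint \(d\leq R^2\). Without it, the case \(R=z\) tiny would give \(L=\log 6M\) while \(\log(1/z)\) is huge. The factor \(16\) available in the contradiction argument should leave enough room to absorb the logarithmic losses. I would first try the contradiction version and, if needed, adjust the exponent \(\delta\) in Step 1.
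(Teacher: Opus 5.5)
Your proof is correct. The constants also close with room to spare, so the obstacle you anticipate is not a real one. With $\delta=1/8$ and $\log x\le \frac{4}{e}x^{1/4}$, Step 1 gives $R^{-3/4}\le \frac{2^{1/8}}{\sqrt e}\cdot\frac{6MR}{z}\le\frac{6MR}{z}$. Hence $\log(1/R)\le\frac43 L$ and $\log(6M/z)=L+\log(1/R)\le\frac73 L$. Your crude inversion, made explicit through $t\log(2/t)\le 2/e$ on $]0,1]$, reads $\log(2/d)\le 2\log(6M/z)-1\le\frac{14}{3}L$, well below the $16L$ you need; the contradiction variant closes just as easily.

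Your route differs from the paper's only in the order in which the same two ingredients are used: monotonicity of $t\sqrt{\log(2/t)}$ and the constraint $d\le R^2$.
\begin{itemize}
\item The paper first inverts at the $R$-free scale $L_0=\log(6M/z)$, evaluating $f$ at $t_*=z/(6M\sqrt{L_0})$ to get $d\ge t_*$. Only afterwards does it feed $R^2\ge d\ge t_*$ back in to obtain $L_0\le 4\log(6MR/z)$.
\item You instead plug the extreme value $d=R^2$ into the hypothesis to compare $\log(6M/z)$ with $\log(6MR/z)$ before any inversion, and then invert directly at the final scale.
\end{itemize}
The paper's order is slightly shorter, since the only absorption it needs is the trivial $\log L_0\le L_0$ rather than an inequality of the form $\sqrt{\log x}\le Cx^{\delta}$. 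Yours yields marginally better constants: $\frac73$ instead of $4$ in the logarithm comparison, and $\sqrt{42}$ instead of $12$ in the final bound. This difference is immaterial for the lemma.
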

\begin{proof}
The proof proceeds in two steps. We first invert the estimate using the
monotonicity of the function appearing on its right-hand side, which gives
a preliminary lower bound. We then use the additional upper bound on \(d\)
to introduce the parameter \(R\) into the logarithm. 

Consider the increasing function
\(f(t) = t\sqrt{\log(2/t)}\)
on the interval \(]0,1]\). The assumption of the lemma can be written as
\begin{equation}
\label{eq:assumption}
\frac{z}{3M}
\,\leq\,
f(d)
\,.
\end{equation}
Set
\(L = \log\left(6M/z\right)\).
By evaluating \(f\) at the point
\[
t_*
\,=\,
\frac{z}{6M\sqrt L},
\]
we obtain
\[
\begin{aligned}
f(t_*)
&\,=\,
\frac{z}{6M\sqrt L}
\sqrt{
L+\log\left(2\sqrt L\right)
}
\,\leq\,
\frac{z}{3M}
\,,
\end{aligned}
\]
where we used
\(\log\left(2\sqrt L\right) \leq L\).
Thus, the assumption \eqref{eq:assumption} implies
\[
f(t_*)
\,\leq\,
\frac{z}{3M} 
\,\leq\, f(d),
\]
and the monotonicity of \(f\) gives the preliminary inequality
\begin{equation}
\label{eq:preliminary-lower-bound}
d
\,\geq\,
t_*
\,=\,
\frac{z}{6M\sqrt L}
\,.
\end{equation}
We now use the upper bound on \(d\) to introduce the parameter \(R\)
into the logarithm. Since \(d \leq R^2\), \eqref{eq:preliminary-lower-bound}
implies \(R^2 \geq z/6M\sqrt L\).
Taking logarithms gives
\[
\begin{aligned}
2\log\left(
R
\right)
&\,\geq\,
-L-\frac{1}{2}\log L
\,\geq\,
-\frac{3}{2}L
\,.
\end{aligned}
\]
Consequently,
\begin{equation}
\label{eq:l-lower-bound}
\log\left(
\frac{6MR}{z}
\right)
\,=\,
L+\log\left(
R
\right)
\,\geq\,
\frac{1}{4}L
\,.
\end{equation}
Combining \eqref{eq:preliminary-lower-bound} 
and \eqref{eq:l-lower-bound} finally yields
\[
d
\,\geq\,
\frac{z}{6M\sqrt L}
\,\geq\,
\frac{z}{
12M
\sqrt{
\log\left(
\frac{6MR}{z}
\right)
}
}
\,,
\]
as claimed.
\end{proof}

\section{Proof of the main results}
\label{sec:proofs}
The proof of theorem~\ref{thm:influence-bound} follows Talagrand's 
method of induction on the dimension, and is very similar to
his proof of the FKG case in \cite{Talagrand1996}.
Theorem~\ref{thm:SDP-bound} could also be 
proven by induction on the dimension, with analogous estimates
for the symmetric disjoint pivotal probabilities. However,
there is a more direct proof, using proposition~\ref{prop:first-level}.
In both cases, we prove the bound on \(D(A,B)\), the results announced in the 
introduction then follow directly from the inequality
\(\Delta_{BK}(A,B)\geq D(A,B)\).
\subsection{The influence lower bound}

The proof of theorem~\ref{thm:influence-bound} follows 
the sketch of proof we have given in the uniform case in
section \ref{sec:sketch}. 
The starting point is the decomposition of the gap along 
the last coordinate given by proposition~\ref{prop:exact-section-identity}.
After using the induction hypothesis on lower-dimensional gaps, 
we will use theorem~\ref{thm:rare-biased-decoupled} to control the error terms.
\begin{proof}[Proof of theorem~\ref{thm:influence-bound}]
We argue by induction on the dimension. For \(n=1\), the claim is
immediate if one of the two events is trivial.
Otherwise, necessarily \(A=B=\{1\}\),
so that \(D(A,B)=p^2\), \(\mu_p(A)=\mu_p(B)=p\),
and \(\mathcal I(A,B)=1\). Since \(B(p)\geq 1\),
\(\phi(q)\leq q\), and \(p\wedge q\leq p\),
the right-hand side of the claimed inequality is at most
\[
c\,\frac{(p\wedge q)^2}{pqB(p)}\,p\,\phi(q)
\leq c(p\wedge q)^2
\leq cp^2,
\]
so the result follows for \(c\leq 1\).

Let $n\geq 2$ and let $A,B\subseteq \{0,1\}^n$ be two increasing events.
Assume that the result in dimension $n-1$ holds.
We may assume that \(\mathcal I(A,B)>0\), 
since otherwise the result is immediate.
We introduce the notation
\[
% I
% \,=\,
% \mathcal I(A,B)
% \,,
% \qquad
R
\,=\,
\sqrt{\mu_p(A)\mu_p(B)}
\,.
\]
We start by choosing an appropriate coordinate along which we will
perform the decomposition of the gap.
The triangle inequality and
theorem~\ref{thm:rare-biased-decoupled} give
\[
\sum_{k=1}^n\sum_{i\neq k}
\left|
\left(\int_{\Piv(k,A)} r_i\,d\mu_p\right)
\left(\int_{\Piv(k,B)} r_i\,d\mu_p\right)
\right|
\\
\leq
32eB(p)\,\mathcal I(A,B)
\log\left(
\frac{
4e^{3/2}\sqrt{\mu_p(A)\mu_p(B)}
}{
pq\,\mathcal I(A,B)
}
\right).
\]
Since \(\mathcal I(A,B)=\sum_{k=1}^n\Inf_k(A)\Inf_k(B)\), there
is therefore a coordinate \(k\) such that
\[
\left|
\sum_{i\neq k}
\left(\int_{\Piv(k,A)} r_i\,d\mu_p\right)
\left(\int_{\Piv(k,B)} r_i\,d\mu_p\right)
\right|
\\
\leq
32eB(p)\,
\Inf_k(A)\Inf_k(B)
\log\left(
\frac{4e^{3/2}R}{pq\mathcal I(A,B)}
\right).
\]
Without loss of generality, we rename this coordinate as \(n\). 
For every remaining coordinate,
\eqref{eq:harmonic-identity} gives
\[
\int_{\Piv(n,A)} r_i\,d\mu_p
\,=\,
\sqrt{pq}\,
\bigl(
\Inf_i(A_1)-\Inf_i(A_0)
\bigr)
\,,
\qquad
i
\,<\,
n
\,,
\]
and the same identity holds for \(B\). Consequently,
\begin{equation}
\label{eq:chosen-coordinate}
\left|
\sum_{i<n}
\bigl(
\Inf_i(A_1)-\Inf_i(A_0)
\bigr)
\bigl(
\Inf_i(B_1)-\Inf_i(B_0)
\bigr)
\right|
\\
\,\leq\,
\frac{32eB(p)}{pq}\,
\Inf_n(A)\Inf_n(B)
\log\left(
\frac{4e^{3/2}R}{pq\mathcal I(A,B)}
\right)
.
\end{equation}
Once we have chosen our coordinate, the next step is to decompose the 
gap along it using the exact section identity given by 
proposition~\ref{prop:exact-section-identity}.
Here, we only need the inequality
\begin{equation}
\label{eq:pre-section-decomposition-influences}
\begin{aligned}
D(A,B) 
\,\geq\,&
(q-p)_+ D(A_0,B_0)
+
(p\wedge q) D(A_1,B_0)
+
(p\wedge q) D(A_0,B_1)
\\
&
+
(p-q)_+ D(A_1,B_1)
+
(p \wedge q)^2\,
\Inf_n(A)\Inf_n(B).
\end{aligned}    
\end{equation}
For simplicity, we write
\[
\begin{aligned}
\lambda_{00}=(q-p)_+, \quad \lambda_{11}=(p-q)_+, \quad \lambda_{01}=\lambda_{10}=p\wedge q.
\end{aligned}
\]
With these notations, inequality \eqref{eq:pre-section-decomposition-influences} becomes
\begin{equation}
\label{eq:section-decomposition-influences}
D(A,B)
\,\geq\,
\sum_{\varepsilon,\eta\in\{0,1\}}
\lambda_{\varepsilon\eta}
D(A_\varepsilon,B_\eta)
+
(p\wedge q)^2\Inf_n(A)\Inf_n(B)  
\end{equation}
To use the induction hypothesis, we define the  weighted average
of the influences of the sections by
\[
\overline {\mathcal I}
\,=\,
\sum_{\varepsilon,\eta\in\{0,1\}}
\lambda_{\varepsilon\eta}
\mathcal I(A_\varepsilon,B_\eta)
\,.
\]
We also denote the section probabilities and their weighted average by
\[
R_{\varepsilon\eta}
\,=\,
\sqrt{
\mu_p(A_\varepsilon)\mu_p(B_\eta)
}
\,,
\qquad
\overline R
\,=\,
\sum_{\varepsilon,\eta\in\{0,1\}}
\lambda_{\varepsilon\eta}
R_{\varepsilon\eta}
\,.
\]
Applying the induction hypothesis to each section gives the first inequality
below. The second follows from the joint convexity in
lemma~\ref{lem:phi}:
\begin{equation}
\label{eq:induction-hypothesis}
\begin{aligned}
D(A,B)
&\,\geq\,
c\frac{(p\wedge q)^2}{pqB(p)}
\sum_{\varepsilon,\eta\in\{0,1\}}
\lambda_{\varepsilon\eta}R_{\varepsilon\eta}
\varphi\left(
\frac{pq\mathcal I(A_\varepsilon,B_\eta)}{R_{\varepsilon\eta}}
\right)
+(p\wedge q)^2\Inf_n(A)\Inf_n(B)
\\
&\,\geq\,
c\frac{(p\wedge q)^2}{pqB(p)}
\overline R
\varphi\left(
\frac{pq\overline {\mathcal I}}{\overline R}
\right)
+
(p\wedge q)^2\Inf_n(A)\Inf_n(B)
\,.
\end{aligned}
\end{equation}
As in the uniform case, we need to compare $\overline {\mathcal I}$ to $\mathcal I(A,B)$ 
and $\overline R$ to $R$.
Since
\(\lambda_{00} +\lambda_{01} = \lambda_{00} +\lambda_{10} = q\)
and
\(\lambda_{10}+\lambda_{11} = \lambda_{01}+\lambda_{11} = p\),
the Cauchy-Schwarz inequality gives
\[
\begin{aligned}
\overline R
&\,\leq\,
\left(
\sum_{\varepsilon,\eta\in\{0,1\}}
\lambda_{\varepsilon\eta}
\mu_p(A_\varepsilon)
\right)^{1/2}
\left(
\sum_{\varepsilon,\eta\in\{0,1\}}
\lambda_{\varepsilon\eta}
\mu_p(B_\eta)
\right)^{1/2}
\\
&\,=\,
\sqrt{
\bigl(
q\mu_p(A_0)+p\mu_p(A_1)
\bigr)
\bigl(
q\mu_p(B_0)+p\mu_p(B_1)
\bigr)
}
\,=\,
R
\,.
\end{aligned}
\]
Moreover, lemma~\ref{lem:phi} states that
the map \((R,s)\mapsto R\varphi(s/R)\) is non-increasing in its first
variable, therefore
\[
\overline R
\varphi\left(
\frac{pq\,\overline {\mathcal I}}{\overline R}
\right)
\,\geq\,
R
\varphi\left(
\frac{pq\,\overline {\mathcal I}}{R}
\right)
\,.
\]
Consequently,
\begin{equation}
\label{eq:induction-section-bound}
D(A,B)
\,\geq\,
c\frac{(p\wedge q)^2}{pqB(p)}
R
\varphi\left(
\frac{pq\,\overline {\mathcal I}}{R}
\right)
+
(p\wedge q)^2\Inf_n(A)\Inf_n(B)
\,.
\end{equation}
We now compare \(\overline {\mathcal I}\) with \(\mathcal I(A,B)\).
For every remaining coordinate, conditioning on the last
coordinate gives
\[
\Inf_i(A)
\,=\,
q\Inf_i(A_0)+p\Inf_i(A_1)
\,,
\qquad
\Inf_i(B)
\,=\,
q\Inf_i(B_0)+p\Inf_i(B_1)
\,.
\]
A direct computation gives
\begin{equation}
\label{eq:influence-computation-1}
\begin{aligned}
&\Inf_i(A)\Inf_i(B)
-
\sum_{\varepsilon,\eta\in\{0,1\}}
\lambda_{\varepsilon\eta}
\Inf_i(A_\varepsilon)\Inf_i(B_\eta)
\\
&=\,
\bigl(q^2-\lambda_{00}\bigr)
\Inf_i(A_0)\Inf_i(B_0)
+
\bigl(pq-\lambda_{01}\bigr)
\Inf_i(A_0)\Inf_i(B_1)
\\
&\,
+
\bigl(pq-\lambda_{10}\bigr)
\Inf_i(A_1)\Inf_i(B_0)
+
\bigl(p^2-\lambda_{11}\bigr)
\Inf_i(A_1)\Inf_i(B_1)
\end{aligned}
\end{equation}
Using the definition of the coefficients
\(\lambda_{\varepsilon\eta}\), the four coefficients above are respectively
\[
(p\wedge q)^2,\quad
-(p\wedge q)^2,\quad
-(p\wedge q)^2,\quad
(p\wedge q)^2,
\]
and therefore, by using the same factorization as in \eqref{eq:influence-factorization}, equation \eqref{eq:influence-computation-1} can be rewritten as
\begin{equation}
\label{eq:influence-computation-2}
\begin{aligned}
&\Inf_i(A)\Inf_i(B)
-
\sum_{\varepsilon,\eta\in\{0,1\}}
\lambda_{\varepsilon\eta}
\Inf_i(A_\varepsilon)\Inf_i(B_\eta)
\\
&=\,
(p\wedge q)^2
\bigl(
\Inf_i(A_1)-\Inf_i(A_0)
\bigr)
\bigl(
\Inf_i(B_1)-\Inf_i(B_0)
\bigr)
\end{aligned}
\end{equation}
Summing over the remaining coordinates, and then adding $\Inf_n(A)\Inf_n(B)$,
yields
\begin{equation}
\label{eq:section-influence-comparison}
\mathcal I(A,B)
\,=\,
\overline {\mathcal I}
+
\Inf_n(A)\Inf_n(B)
+
(p\wedge q)^2
\sum_{i<n}
\bigl(
\Inf_i(A_1)-\Inf_i(A_0)
\bigr)
\bigl(
\Inf_i(B_1)-\Inf_i(B_0)
\bigr)
\,.
\end{equation}
If \(\mathcal I(A,B) \leq \overline {\mathcal I}\),
the monotonicity in the second variable of $\varphi$ and
\eqref{eq:induction-section-bound} immediately give the desired estimate.
We may therefore assume that \(\mathcal I(A,B) > \overline {\mathcal I}\).
Lemma~\ref{lem:phi} then gives
\begin{equation}
\label{eq:induction-comparison}
R
\varphi\left(
\frac{pq\mathcal I(A,B)}{R}
\right)
\,\leq\,
R
\varphi\left(
\frac{pq\,\overline {\mathcal I}}{R}
\right)
+
\frac{
2pq\,
\bigl(
\mathcal I(A,B)-\overline {\mathcal I}
\bigr)
}{
\log\left(
eR/(pq\mathcal I(A,B))
\right)
}
\,.
\end{equation}
Combining \eqref{eq:section-influence-comparison} with
\eqref{eq:chosen-coordinate}, we obtain
\begin{equation}
\label{eq:influence-comparison}
\mathcal I(A,B)-\overline {\mathcal I}
 \,\leq\,
\Inf_n(A)\Inf_n(B)
+
\frac{32e(p\wedge q)^2B(p)}{pq}\,
\Inf_n(A)\Inf_n(B)
\log\left(
\frac{4e^{3/2}R}{pq\mathcal I(A,B)}
\right)
\,.
\end{equation}
Moreover,
\[
pq\sum_{i=1}^n
\Inf_i(A)\Inf_i(B)
\,=\,
\sum_{i=1}^n
\Bigl(
\int_{A} r_i\,d\mu_p\Bigr)
\Bigl(
\int_{B} r_i\,d\mu_p
\Bigr)
\]
Using the Cauchy-Schwarz inequality and then the Bessel inequality, we obtain that 
\[
\begin{aligned}
pq\sum_{i=1}^n
\Inf_i(A)\Inf_i(B)
&\leq
\sqrt{
\sum_{i=1}^n
\left(
\int_{A} r_i\,d\mu_p\right)^2
}
\sqrt{
\sum_{i=1}^n
\left(
\int_{B} r_i\,d\mu_p
\right)^2
}
\\
&\leq
\sqrt{
\mu_p(A)\mu_p(B)}
\end{aligned}
\]
In other words, this means that \(pq\mathcal I(A,B)\leq R\). Hence,
\[
\frac{
\log\left(
4e^{3/2}R/(pq\mathcal I(A,B))
\right)
}{
\log\left(
eR/(pq\mathcal I(A,B))
\right)
}
\,\leq\,
3
\,.
\]
Moreover,
\((p\wedge q)^2 \leq pq\,\) and \(B(p) \geq 1\).
Hence, plugging \eqref{eq:influence-comparison} into \eqref{eq:induction-comparison} gives
\begin{equation}
\label{eq:induction-comparison-2}
R
\varphi\left(
\frac{pq\mathcal I(A,B)}{R}
\right)
\,\leq\,
R
\varphi\left(
\frac{pq\,\overline {\mathcal I}}{R}
\right)
+
(2+192e)\,
pqB(p)\,
\Inf_n(A)\Inf_n(B)
\,.
\end{equation}
We choose the universal constant \(c\) so that
\(c(2+192e) \leq 1\).
The inequalities \eqref{eq:induction-section-bound} 
and \eqref{eq:induction-comparison-2} 
yield the inequality on \(D(A,B)\) in dimension $n$, and the 
induction is completed.
% we obtain
% \[
% \begin{aligned}
% c\frac{(p\wedge q)^2}{pqB(p)}
% R
% \varphi\left(
% \frac{pq\mathcal I(A,B)}{R}
% \right)
% &\leq
% c\frac{(p\wedge q)^2}{pqB(p)}
% R
% \varphi\left(
% \frac{pq\,\overline{\mathcal I}}{R}
% \right)
% +
% (p\wedge q)^2\Inf_n(A)\Inf_n(B)
% \\
% &\,\leq\,
% D(A,B)
% \,,
% \end{aligned}
% \]
% where the last inequality follows from
% \eqref{eq:induction-section-bound}. This closes the induction.
\end{proof}

\subsection{The symmetric disjoint pivot and influence lower bound}
We now prove theorem~\ref{thm:SDP-bound}. We apply directly 
proposition~\ref{prop:first-level} to the defect event $\mathcal E(A,B)$. 
Rewriting the result using lemma~\ref{lem:first-level-inversion} will give 
the desired bound.
\begin{proof}[Proof of theorem~\ref{thm:SDP-bound}]
  
There is nothing to prove when \(\|\rho(A,B)\|_2=0\). We may therefore assume that
\(\|\rho(A,B)\|_2>0\).
By proposition~\ref{prop:diagonal-fourier}, the diagonal Fourier
coefficients of the defect event satisfy
\[
\forall i \in \{1,\ldots,n\},
\quad
\int_{\mathcal E(A,B)}\chi_i\,d\nu_p
=
pq\,\rho_i(A,B).
\]
Consequently,
\[
\left(
\sum_{i=1}^n
\left(
\int_{\mathcal E(A,B)}\chi_i\,d\nu_p
\right)^2
\right)^{1/2}
=
pq\|\rho(A,B)\|_2.
\]
Applying proposition~\ref{prop:first-level} to \(\mathcal E(A,B)\), 
whose probability is
\(D(A,B)\), gives
\[
pq\|\rho(A,B)\|_2
\leq
3M_pD(A,B)
\sqrt{
\log\left(
\frac{2}{D(A,B)}
\right)
},
\]
where we recall that we defined \(M_p=\max\left(p/q,q/p\right)\). 
By corollary \ref{cor:diagonal-bessel},
\[
pq\|\rho(A,B)\|_2
=
\left(
\sum_{i=1}^n
\left(
\int_{\mathcal E(A,B)}\chi_i\,d\nu_p
\right)^2
\right)^{1/2}
\leq
\sqrt{D(A,B)\bigl(1-D(A,B)\bigr)}.
\]
Since
\(D(A,B)\leq\mu_p(A)\mu_p(B)\),
it follows that
\(pq\|\rho(A,B)\|_2\leq\sqrt{\mu_p(A)\mu_p(B)}\).
We may therefore apply lemma~\ref{lem:first-level-inversion} with
\[
M=M_p,
\qquad
z=pq\|\rho(A,B)\|_2,
\qquad
d=D(A,B),
\qquad
R=\sqrt{\mu_p(A)\mu_p(B)}.
\]
We obtain
\[
z
\leq
3Md
\sqrt{
\log\left(
\frac{2}{d}
\right)
}.
\]
Since $M_p\geq 1$, $0<z\leq R\leq 1$, $0<d\leq R^2$, we can apply
lemma~\ref{lem:first-level-inversion}, which yields
\[
d
\geq
\frac{
z
}{
12M
\sqrt{
\log\left(
\frac{
6MR
}{
z
}
\right)
}
}.
\]
Substituting
\[
Q=\|\rho(A,B)\|_2,
\qquad
R=\sqrt{\mu_p(A)\mu_p(B)},
\]
\[
\frac{pq}{M_p}=(p\wedge q)^2,
\]
we obtain
\[
D(A,B)
\geq
\frac{
(p\wedge q)^2\|\rho(A,B)\|_2
}{
12
\sqrt{
\log\left(
\frac{
6\sqrt{\mu_p(A)\mu_p(B)}
}{
(p\wedge q)^2\|\rho(A,B)\|_2
}
\right)
}
},
\]
which proves the result.
\end{proof}

\section{Examples and sharpness}
\label{sec:examples}
We conclude with examples using a classical family of increasing 
events: thresholds. In one case, the bound of theorem~\ref{thm:SDP-bound} is sharp, and the symmetric 
disjoint pivot dominates the contribution. In the other case,
the bound of theorem~\ref{thm:influence-bound} is sharp.
We also exhibit an example for which neither of the estimates has the correct order.

Throughout this section, the bias \(p\) is fixed. Implicit constants may
depend on \(p\), and on any other fixed parameter appearing in the examples,
but not on the dimension.
Let \(n\geq 1\), and let \(1\leq k \leq n\). The symmetric threshold associated 
to \(k,n\) is the event that the
configuration contains at least \(k\) open coordinates. More precisely,
\[
H_{n,k}
\,=\,
\left\{\,
x\in\{0,1\}^n:
\sum_{i=1}^n x_i
\,\geq\,
k
\,\right\}
\,.
\]
For two configurations $x$ and $y$, we denote their supports
by \(X\) and \(Y\). A pair $(x,y)$ therefore admits disjoint testimony
precisely when each support has cardinality at least \(k\), and their union
has cardinality at least \(2k\). Equivalently,
\begin{equation}
\label{eq:symmetric-product-occurrence}
(x,y)\in H_{n,k}\boxtimes H_{n,k}
\quad\Longleftrightarrow\quad
\begin{cases}
|X|
\,\geq\,
k\,,\\
|Y|
\,\geq\,
k\,,\\
|X\cup Y|
\,\geq\,
2k\,.
\end{cases}
\end{equation}

\noindent
Fix a coordinate \(i\), and denote by \(X_i\) and \(Y_i\) the supports
restricted to the remaining coordinates. The same characterization shows
that \(i\) is a symmetric disjoint pivot precisely when both restricted
supports contain at least \(k\) coordinates, while their union
has exactly \(2k-1\) coordinates. More
precisely, the coordinate $i$ is a symmetric disjoint pivot in the pair $(x,y)$ 
if and only if
\begin{equation}
\label{eq:symmetric-SDP}
|X_i|
\,\geq\,
k\,,
\qquad
|Y_i|
\,\geq\,
k\,,
\qquad
|X_i\cup Y_i|
\,=\,
2k-1
\,.
\end{equation}
The coordinate \(i\) is pivotal for the event \(H_{n,k}\) in a configuration $x$
precisely when there are exactly $k-1$ other open coordinates. In other words,
$i$ is pivotal if and only if 
\begin{equation}
\label{eq:symmetric-influence}
|X_i|
\,=\,
k-1
\,.
\end{equation}
Recall that, for two quantities $f$ and $g$ depending on $n$, we 
write $f\asymp g$ 
if there exists constants $0<c<C<\infty$,  independent of n, such that
$cg<f<Cg$ for all $n$ large enough. When these constants are allowed to depend on
$p$, we write $f\asymp_p g$.
\paragraph{The OR threshold.}

Consider the threshold of level one $H_{n,1}$, which is the OR event. Its probability is
\(\mu_p(H_{n,1}) = 1-q^n\).
Among pairs in \(H_{n,1}\times H_{n,1}\), disjoint testimony fails exactly
when the two supports are equal to the same singleton. There are \(n\)
possible singletons, and hence
\(D(H_{n,1},H_{n,1}) = np^2q^{2n-2}\).
The conditions \eqref{eq:symmetric-influence} and \eqref{eq:symmetric-SDP} 
yield, after some computation, that :
\[
\|\rho(H_{n,1},H_{n,1})\|_2
\,\asymp_p\,
\|\pi(H_{n,1},H_{n,1})\|_2
\,\asymp_p\,
n^{3/2}q^{2n}
\,.
\]
Therefore, we have
\[
\frac{
\|\rho(H_{n,1},H_{n,1})\|_2
}{
\sqrt{
\log\left(
\dfrac{
6M_p\mu_p(H_{n,1})
}{
pq\|\rho(H_{n,1},H_{n,1})\|_2
}
\right)
}
}
\,\asymp_p\,
nq^{2n}
\,\asymp_p\,
D(H_{n,1},H_{n,1})
\,.
\]
Thus, the bound of theorem~\ref{thm:SDP-bound} has the correct order, 
and the quantity $\|\rho(H_{n,1},H_{n,1})\|_2$ is dominated by 
the symmetric disjoint pivots contributions.

By contrast, the quantity appearing in
theorem~\ref{thm:influence-bound} satisfies
\[
\mu_p(H_{n,1})
\varphi\left(
\frac{
pqI(H_{n,1},H_{n,1})
}{
\mu_p(H_{n,1})
}
\right)
\,\asymp_p\,
q^{2n}
\,.
\]
The influence estimate of theorem~\ref{thm:influence-bound} is 
therefore smaller than the gap by
a factor of order \(n\).
The same example shows that the exponent \(1/2\) in the logarithm of
theorem~\ref{thm:SDP-bound} is optimal. Indeed, replacing the square root
by a power \(\gamma\) strictly smaller than \(1/2\) would give a lower bound
larger than the true defect by a factor of order
\(n^{1/2-\gamma}\).
\paragraph{The majority threshold.}
We now give a simple example showing that the influence bound 
of theorem~\ref{thm:influence-bound} can have the correct order,
while the bound of theorem~\ref{thm:SDP-bound} can be substantially weaker. We work under the uniform
measure and take \(n=2m+1\). Let
\[
H_n
=
\left\{
x\in\{0,1\}^n :
\sum_{i=1}^n x_i \geq m+1
\right\}.
\]
By symmetry,
\[
\mu(H_n)=\frac12.
\]
Moreover, \(2(m+1) > n\), hence a configuration of \(H_n\times H_n\) cannot admit
disjoint witnesses for the product event. In other words, the event 
\( H_n \boxtimes H_n \) is empty.
The charcterization \eqref{eq:symmetric-influence}, along with standard
computations, give that 
\[
\mu(H_n)\,
\varphi\left(
\frac{\frac14 \mathcal I(H_n,H_n)}{\mu(H_n)}
\right)
\asymp 1.
\]
Thus, the lower bound of theorem~\ref{thm:influence-bound} is of correct
order.

We now turn to the other bound. By the characterization of symmetric
disjoint pivots in~\eqref{eq:symmetric-SDP}, if coordinate \(i\) were a
symmetric disjoint pivot, the supports restricted to the remaining coordinates
would have to satisfy
\[
|X_i\cup Y_i|
=
2(m+1)-1
=
2m+1.
\]
This is impossible, since \(X_i\cup Y_i\) is contained in a set of cardinality
\(2m\). Consequently, the probability \(\pi_i(H_n,H_n)\) is zero
for every coordinate \(i\), and 
\[
\|\rho(H_n,H_n)\|_2
=
\sqrt n\,\Inf_i(H_n)^2
\asymp
\frac1{\sqrt n}.
\]
Since \(\mu(H_n)=1/2\), the logarithmic factor in
theorem~\ref{thm:SDP-bound} is of order \(\log n\). It follows that the
corresponding lower bound is only of order
\[
\frac{\|\rho(H_n,H_n)\|_2}
{\sqrt{\log\bigl(C/\|\rho(H_n,H_n)\|_2\bigr)}}
\asymp
\frac1{\sqrt{n\log n}}.
\]
\paragraph{A global obstruction.}

The quantities of both bounds
do not capture the full defect for every symmetric
threshold. To see this, we work under the uniform measure and
we fix a real number \(\alpha\) such that $3/8 < \alpha < 1/2$. We define
the corresponding threshold by
\(k_n = \lfloor\alpha n\rfloor\) and let \(H_n = H_{n,k_n}\). For two independent uniform configurations, the normalized cardinalities
of the two supports converge in probability to \(1/2\), while that of their
union converges in probability to \(3/4\).
Since \(\alpha<1/2\), both configurations
belong to \(H_n\) with probability tending to one. On the other hand,
\(2\alpha>3/4\), hence their union has fewer
than \(2k_n\) coordinates with probability tending to one. The
characterization \eqref{eq:symmetric-product-occurrence} therefore gives that
\(D(H_n,H_n)\) converges to $1$.

We next estimate the influences. By \eqref{eq:symmetric-influence},
the influence of a coordinate is given by
\[
\operatorname{Inf}_i(H_n)
\,=\,
\P\left(
\operatorname{Bin}\left(n-1,\frac12\right)
\,=\,
k_n-1
\right)
\,.
\]
Moreover, all coordinates have the same influence, and hence
\(\mathcal I(H_n,H_n) = n\operatorname{Inf}_i(H_n)^2\).
The relevant value lies at linear distance from the mean of the binomial
distribution, hence Hoeffding's inequality gives that
\[
\begin{aligned}
\mathcal I(H_n,H_n)
\leq
e^{-c n}\,, 
\end{aligned}
\]
for all sufficiently large \(n\) and some constant \(c > 0\).
We finally estimate the symmetric disjoint pivots. By
\eqref{eq:symmetric-SDP}, their probability is bounded above by the
probability that the union of the two restricted supports has cardinality
\(2k_n-1\). Since each coordinate belongs to this union with probability
\(3/4\), independently of the other coordinates, we obtain
\[
\pi_i(H_n,H_n)
\,\leq\,
\P\left(
\operatorname{Bin}\left(n-1,\frac34\right)
\,=\,
2k_n-1
\right)
\,.
\]
Moreover, \(\|\pi(H_n,H_n)\|_2 = \sqrt{n}\,\pi_i(H_n,H_n)\).
The relevant value again lies at linear distance from the mean so we obtain
\(\|\pi(H_n,H_n)\|_2 \leq e^{-c n}\)
for all sufficiently large \(n\),
after possibly decreasing the constant \(c\).
Since
\[
\left(
\sum_{i=1}^n
\Inf_i(H_n)^4
\right)^{1/2}
\,\leq\,
\mathcal I(H_n,H_n)\,,
\]
the triangle inequality gives
\[
\|\rho(H_n,H_n)\|_2
\,\leq\,
\mathcal I(H_n,H_n)
+
\|\pi(H_n,H_n)\|_2
\,.
\]
We therefore
have
\[
\|\rho(H_n,H_n)\|_2
\,\leq\,
2e^{-c n}
\]
for all sufficiently large \(n\). Thus, the gap tends to one, whereas
both quantities appearing in our lower bounds are exponentially small.
\subsubsection*{Acknowledgements}
I warmly thank Raphaël Cerf for introducing this problem to me 
and for his helpful 
comments on the successive drafts of this paper. 
\subsubsection*{AI statement}
AI (Github Copilot, ChatGPT) was only used for typesetting, language clean-up and 
minor language translations from French. 
All mathematical content was developed and written by the author. 
\bibliographystyle{amsplain}
\bibliography{bk}
\appendix

\section{Appendix: Proofs}
In this appendix, we give the proofs of lemma \ref{lem:biased-rare-talagrand}, lemma \ref{lem:phi} and 
theorem \ref{thm:rare-biased-decoupled}.
We recall that these results are an extension of Talagrand's results to 
the biased case,
with the slight improvement that we keep track of the probability of the
events in the upper bound. 
These estimates can be considered as easier variants of the 
results of Keller and Kindler 
\cite{KellerKindler2013}, who extended Talagrand's results to
the biased case, higher Fourier levels, and to 
the case of non-monotone events. The fact that we consider 
increasing events allows us to avoid some technical assumptions, and
slightly simplifies the proofs. 

We will make use of the following large-deviation estimate, which is the
degree-one case of Keller and Kindler's theorem~10
\cite{KellerKindler2013}. If
\[
G
\,=\,
\sum_{i=1}^m c_i r_i\,,
\qquad
\sum_{i=1}^m c_i^2
\,=\,
1\,,
\]
then \(G\) has unit \(L^2\)-norm and Fourier degree one.
Hence, for every  
\(t\geq\sqrt{2B(p)e}\),
\begin{equation}
\label{eq:biased-linear-tail}
\mu_p\bigl(|G| \,\geq\, t\bigr)
\,\leq\,
\exp\left(
-\frac{t^2}{2B(p)e}
\right)\,.
\end{equation}

\begin{proof}[Proof of lemma~\ref{lem:biased-rare-talagrand}]
We follow Talagrand's strategy in
\cite[lemma~3.1]{Talagrand1996}.  We replace his dyadic decomposition by
a layer-cake formulation closer in spirit to the approach 
of Keller and Kindler in~\cite{KellerKindler2013}.
Write \(\mathcal F_I\) for the sigma-field generated by the coordinates in
\(I\). For a coordinate \(k\) in \(J\), set
\[
h_k
\,=\,
\E\left[
\1_{\Piv(k,A)}
\mid
\mathcal F_I
\right]\,.
\]
Thus, \(h_k\) is the conditional probability that coordinate \(k\) is
pivotal for \(A\) once the coordinates in \(I\) have been fixed. In
particular \(\Inf_k(A)=\E[h_k].\)
We first prove a preliminary result using the Bessel inequality.
Since \(A\) is increasing, we have the identity
\begin{equation}
\label{eq:conditional-pivotal}
\E\left[
\1_A r_k
\mid
\mathcal F_I
\right]
\,=\,
\sqrt{pq}\,h_k\,.
\end{equation}
Indeed, let \(\mathcal F_{\Omega \setminus \{k\}}\) be the sigma-field generated by
all coordinates except \(k\). Since \(A\) is increasing, conditioning on
\(\mathcal F_{\Omega \setminus \{k\}}\) gives
\[
\begin{aligned}
\mathbb E\!\left[\mathbf 1_A r_k \mid \mathcal F_{\Omega \setminus \{k\}}\right]
&\,=\,
\frac{pq}{\sqrt{pq}}
\Bigl(
\mathbf 1_A(\setcoord{x}{k}{1})
-
\mathbf 1_A(\setcoord{x}{k}{0})
\Bigr) \\
&\,=\,
\sqrt{pq}\,\mathbf 1_{\operatorname{Piv}(k,A)} \,.
\end{aligned}
\]
Since \(\mathcal F_I\subseteq\mathcal F_{\Omega \setminus \{k\}}\), taking expectations conditionally on \(\mathcal F_I\)
gives the desired identity.
Conditionally on \(\mathcal F_I\), the functions \(r_k\), with \(k\) in
\(J\), form an orthonormal family on the remaining coordinates. Hence, squaring 
\eqref{eq:conditional-pivotal}, summing over \(k\) in \(J\) and using
Bessel's inequality gives
\[
pq\sum_{k\in J}h_k^2
\,\leq\,
\E\left[
\1_A
\mid
\mathcal F_I
\right]\,.
\]
Taking expectations, we obtain
\begin{equation}
\label{eq:rare-conditional-bessel}
pq\sum_{k\in J}\E[h_k^2]
\,\leq\,
\mu_p(A)\,.
\end{equation}

\noindent
With this bound in mind, we proceed to the main argument.
Fix a coordinate \(k\) in \(L_s(A)\). There is nothing to prove when
\(\Inf_k(A)=0\). Otherwise, set
\[
c_i
\,=\,
\frac{\displaystyle \int_{\operatorname{Piv}(k,A)} r_i\,d\mu_p}
{\displaystyle
\left(
\sum_{j\in I}
\left(
\int_{\operatorname{Piv}(k,A)} r_j\,d\mu_p
\right)^2
\right)^{\!1/2}}
\qquad
G_k
\,=\,
\sum_{i\in I}c_i r_i.
\]
Then
\[
\sum_{i\in I}c_i^2
\,=\,
1,
\qquad
\sum_{i\in I}c_i\left(\int_{\operatorname{Piv}(k,A)} r_i\,d\mu_p\right)
\,=\,
\left(\sum_{i\in I}
\left(\int_{\operatorname{Piv}(k,A)}
r_i\,d\mu_p\right)
^2\right)^{1/2}.
\]
Since \(G_k\) is \(\mathcal F_I\)-measurable and
\(\int_{\operatorname{Piv}(k,A)} r_i\,d\mu_p=\E[r_i h_k]\),

\begin{equation}
\label{eq:assumption-rare-tail}
\E[G_kh_k]
\,=\,
\sum_{i\in I}\left(c_i\int_{\operatorname{Piv}(k,A)} r_i\,d\mu_p\right)
\,>\,
s\,\Inf_k(A),
\end{equation}
 because \(k\) belongs to \(L_s(A)\).
As \(h_k\) is non-negative, we have
\(\E[|G_k|h_k] > s\,\Inf_k(A)\).
The contribution of the event where \(|G_k|\) is at most \(s/2\) is
bounded by
\[
\E\left[
|G_k|h_k
\mathbf 1_{\{\, |G_k| \,\leq\, s/2 \,\}}
\right]
\,\leq\,
\frac{s}{2}\E[h_k]
\,=\,
\frac{s}{2}\Inf_k(A)\,.
\]
Consequently,
\begin{equation}
\label{eq:Gk-lower-bound}
\E\left[
|G_k|h_k
\mathbf 1_{\{\, |G_k| \,>\, s/2 \,\}}
\right]
\,\geq\,
\frac{s}{2}\Inf_k(A)\,.
\end{equation}
By Cauchy-Schwarz,
\begin{equation}
\label{eq:rare-tail-cauchy}
\frac{s^2}{4}\Inf_k(A)^2
\,\leq\,
\E\left[
G_k^2\mathbf 1_{\{\, |G_k| \,>\, s/2 \,\}}
\right]
\E\left[
h_k^2\mathbf 1_{\{\, |G_k| \,>\, s/2 \,\}}
\right]\,.
\end{equation}
Suppose first that
\(s \geq 2\sqrt{2B(p)e}\).
The identity
\[
\mathbb E[Z]
\,=\,
\int_0^\infty \mu_p(Z>t)\,dt \,,
\]
valid for every non-negative random variable \(Z\), applied to
\(Z=G_k^2\mathbf 1_{\{|G_k|>s/2\}}\), gives
\[
\mathbb E\!\left[
G_k^2\mathbf 1_{\{|G_k|>s/2\}}
\right]
\,=\,
\frac{s^2}{4}\,
\mu_p\!\left(|G_k|>\frac{s}{2}\right)
+
\int_{s^2/4}^{\infty}
\mu_p\!\left(|G_k|>\sqrt{t}\right)\,dt \,.
\]
Since \(G_k\) has degree one and unit \(L^2\)-norm, the tail
estimate~\eqref{eq:biased-linear-tail} yields
\begin{equation}
\label{eq:tail-estimate-upper-bound}
\begin{aligned}
\mathbb E\!\left[
G_k^2\mathbf 1_{\{|G_k|>s/2\}}
\right]
&\,\leq\,
\frac{s^2}{4}
\exp\!\left(-\frac{s^2}{8B(p)e}\right)
+
\int_{s^2/4}^{\infty}
\exp\!\left(-\frac{t}{2B(p)e}\right)\,dt \\
&\,=\,
\left(\frac{s^2}{4}+2B(p)e\right)
\exp\!\left(-\frac{s^2}{8B(p)e}\right)\,\leq\,
\frac{s^2}{2}
\exp\!\left(-\frac{s^2}{8B(p)e}\right),
\end{aligned}
\end{equation}
where the last inequality follows from
\(s^2\geq 8B(p)e\). Combining \eqref{eq:tail-estimate-upper-bound} with 
\eqref{eq:rare-tail-cauchy} yields
\[
\Inf_k(A)^2
\,\leq\,
2
\exp\left(
-\frac{s^2}{8B(p)e}
\right)
\E\left[
h_k^2\mathbf 1_{\{\, |G_k| \,>\, s/2 \,\}}
\right]\,.
\]
Summing over the coordinates \(k\) in \(L_s\), multiplying by \(pq\), and
using \eqref{eq:rare-conditional-bessel} to bound the second factor, we obtain
\[
\begin{aligned}
pq\sum_{k\in L_s}\Inf_k(A)^2
&\,\leq\,
2
\exp\left(
-\frac{s^2}{8B(p)e}
\right)
\E\left[
pq\sum_{k\in L_s}
h_k^2\mathbf 1_{\{\, |G_k| \,>\, s/2 \,\}}
\right]
\\
&\,\leq\,
2\mu_p(A)
\exp\left(
-\frac{s^2}{8B(p)e}
\right)\,.
\end{aligned}
\]
It remains to consider
\(0 < s < 2\sqrt{2B(p)e}\).
By Jensen's inequality and \eqref{eq:rare-conditional-bessel},
\[
pq\sum_{k\in L_s}\Inf_k(A)^2
\,\leq\,
pq\sum_{k\in J}\E[h_k^2]
\,\leq\,
\mu_p(A)\,.
\]
Since
\(\exp\left(-\frac{s^2}{8B(p)e}\right) \geq e^{-1}\),
we get
\[
pq\sum_{k\in L_s}\Inf_k(A)^2
\,\leq\,
e\,\mu_p(A)
\exp\left(
-\frac{s^2}{8B(p)e}
\right)\,.
\]
Since \(2 \leq e\), this completes the proof.
\end{proof}

\begin{proof}[Proof of theorem~\ref{thm:rare-biased-decoupled}]
For simplicity, for distinct coordinates \(i\) and
\(k\), set
\[
\beta_{k,i}(A)
\,=\,
\int_{\Piv(k,A)} r_i\,d\mu_p\,.
\]
We start with the case of a fixed partition of the coordinates. The general
case will be obtained by averaging over a random partition.
Fix a partition \(U\sqcup V=\unn,\)
and denote by \(I_V\) the contribution of the coordinates in \(V\), namely
\[
I_V 
\,=\,
\sum_{j\in V}\Inf_j(A)\Inf_j(B).
\]
We first prove
\begin{equation}
\label{eq:rare-fixed-partition}
\sum_{j\in V}\sum_{i\in U}
\left|
\beta_{j,i}(A)\beta_{j,i}(B)
\right|
\,\leq\,
16eB(p)\,I_V
\log\left(
\frac{
2e^{3/2}\sqrt{\mu_p(A)\mu_p(B)}
}{
pq\,I_V
}
\right)
\,.
\end{equation}
If \(I_V = 0\), then every term on the left-hand side of
\eqref{eq:rare-fixed-partition} is zero. We may therefore assume that
\(I_V > 0\). For a positive real number \(t\), define
\[
\mathcal L_t
\,=\,
\left\{\,
j\in V:
\sum_{i\in U}
\left|
\beta_{j,i}(A)\beta_{j,i}(B)
\right|
\,>\,
t\,\Inf_j(A)\Inf_j(B)
\,\right\}
\,.
\]
The definition of \(\mathcal L_t\) gives, for every coordinate \(j\) in
\(V\),
\[
\begin{aligned}
\sum_{i\in U}
\left|
\beta_{j,i}(A)\beta_{j,i}(B)
\right|
\,=\,
\Inf_j(A)\Inf_j(B)
\int_0^\infty
\mathbf 1_{\{\,j\in\mathcal L_t\,\}}\,dt
\,.
\end{aligned}
\]
By Tonelli's theorem,
\begin{equation}
\label{eq:level-set-identity}
\begin{aligned}
\sum_{j\in V}\sum_{i\in U}
\left|
\beta_{j,i}(A)\beta_{j,i}(B)
\right|
\,=\,
\int_0^\infty
\sum_{j\in\mathcal L_t}
\Inf_j(A)\Inf_j(B)\,dt
\,.
\end{aligned}
\end{equation}
Therefore, to bound the left-hand side of \eqref{eq:rare-fixed-partition},
it suffices to control the influences of coordinates in \(\mathcal L_t\).
Define the corresponding set for the first event by
\[
\mathcal L_{\sqrt t}^{A}
\,=\,
\left\{\,
j\in V:
\left(
\sum_{i\in U}\beta_{j,i}(A)^2
\right)^{1/2}
\,>\,
\sqrt t\,\Inf_j(A)
\,\right\}
\,,
\]
and define \(\mathcal L_{\sqrt t}^{B}\) analogously. The
Cauchy-Schwarz inequality gives
\[
\mathcal L_t
\,\subseteq\,
\mathcal L_{\sqrt t}^{A}
\cup
\mathcal L_{\sqrt t}^{B}
\,.
\]
Lemma~\ref{lem:biased-rare-talagrand} gives
\begin{equation}
\label{eq:level-set-influence-bound}
pq
\sum_{j\in\mathcal L_{\sqrt t}^{A}}
\Inf_j(A)^2
\,\leq\,
e\,\mu_p(A)
\exp\left(
-\frac{t}{8B(p)e}
\right)
\,.
\end{equation}
The Bessel inequality also gives 
\begin{equation}
\label{eq:bessel-bound}
pq\sum_{j=1}^n
\Inf_j(B)^2
\,=\,
\sum_{j=1}^n
\left(
\int_{B}
r_j\, d\mu_p
\right)^2
\leq
\mu_p(B)
\end{equation}
Using Cauchy-Schwarz and bounding the result with
\eqref{eq:level-set-influence-bound} and 
\eqref{eq:bessel-bound}, we
obtain
\[
\begin{aligned}
pq
\sum_{j\in\mathcal L_{\sqrt t}^{A}}
\Inf_j(A)\Inf_j(B)
&\,\leq\,
\left(
pq\sum_{j\in\mathcal L_{\sqrt t}^{A}}
\Inf_j(A)^2
\right)^{1/2}
\left(
pq\sum_{j=1}^n
\Inf_j(B)^2
\right)^{1/2}
\\
&\,\leq\,
\sqrt e\,\sqrt{\mu_p(A)\mu_p(B)}
\exp\left(
-\frac{t}{16B(p)e}
\right)
\,.
\end{aligned}
\]
The same estimate holds after interchanging the two events. Consequently,
\begin{equation}
\label{eq:rare-product-level-set}
pq
\sum_{j\in\mathcal L_t}
\Inf_j(A)\Inf_j(B)
\,\leq\,
2\sqrt e\,\sqrt{\mu_p(A)\mu_p(B)}
\exp\left(
-\frac{t}{16B(p)e}
\right)
\,.
\end{equation}
We now return to identity \eqref{eq:level-set-identity}. 
Choose the splitting point
\[
t_0
\,=\,
16B(p)e
\log\left(
\frac{
2\sqrt e\,\sqrt{\mu_p(A)\mu_p(B)}
}{
pq\,I_V
}
\right)
\,.
\]
The Cauchy-Schwarz inequality and Bessel bounds give
\begin{equation*}
\begin{aligned}
pq\,I_V
&\,\leq\,
\left(
pq\sum_{j\in V}\Inf_j(A)^2
\right)^{1/2}
\left(
pq\sum_{j\in V}\Inf_j(B)^2
\right)^{1/2}
\\
&\,\leq\,
\sqrt{\mu_p(A)\mu_p(B)}
\,.
\end{aligned}
\end{equation*}
This ensures that the splitting point \(t_0\) is positive. Splitting
the integral at \(t_0\) in \eqref{eq:level-set-identity} and using
\eqref{eq:rare-product-level-set}, we obtain
\[
\sum_{j\in V}\sum_{i\in U}
\left|
\beta_{j,i}(A)\beta_{j,i}(B)
\right|
\,\leq\,
t_0I_V
+
\frac{
2\sqrt e\,\sqrt{\mu_p(A)\mu_p(B)}
}{
pq
}
\int_{t_0}^{\infty}
\exp\left(
-\frac{t}{16B(p)e}
\right)\,dt
\]
Evaluating the integral and using the definition of \(t_0\),
this becomes
\[
\begin{aligned}
\sum_{j\in V}\sum_{i\in U}
\left|
\beta_{j,i}(A)\beta_{j,i}(B)
\right|
&\,\leq\,
t_0I_V
+
\frac{
32B(p)e^{3/2}\sqrt{\mu_p(A)\mu_p(B)}
}{
pq
}
\exp\left(
-\frac{t_0}{16B(p)e}
\right)
\\
&\,=\,
t_0I_V
+
16B(p)e\,I_V
\\
&\,=\,
16B(p)e\,I_V
\left[
\log\left(
\frac{
2\sqrt e\,\sqrt{\mu_p(A)\mu_p(B)}
}{
pq\,I_V
}
\right)
+1
\right]
\\
&\,=\,
16B(p)e\,I_V
\log\left(
\frac{
2e^{3/2}\sqrt{\mu_p(A)\mu_p(B)}
}{
pq\,I_V
}
\right)
\,.
\end{aligned}
\]
This proves \eqref{eq:rare-fixed-partition}.

It remains to average over the partitions. Choose the partition at random
by placing each coordinate in \(V\) independently with probability
\(1/2\), and let \(U\) be its complement. Each ordered pair of distinct
coordinates appears in the fixed-partition sum with probability \(1/4\).
Moreover, \(\mathbb E[I_V]=\frac{1}{2}\mathcal I(A,B)\).
The function defined by
\[
x
\longmapsto
x\log\left(
\frac{
2e^{3/2}\sqrt{\mu_p(A)\mu_p(B)}
}{
pq\,x
}
\right)
\]
is concave on the relevant interval and has value zero at the origin.
Hence, \eqref{eq:rare-fixed-partition} and Jensen's inequality give
\[
\frac{1}{4}
\sum_{k=1}^n\sum_{i\neq k}
\left|
\beta_{k,i}(A)\beta_{k,i}(B)
\right|
\,\leq\,
8eB(p)\,\mathcal I(A,B)
\log\left(
\frac{
4e^{3/2}\sqrt{\mu_p(A)\mu_p(B)}
}{
pq\,\mathcal I(A,B)
}
\right)
\,.
\] 
Multiplying by \(4\) proves the theorem.

\end{proof}

\begin{proof}[Proof of lemma~\ref{lem:phi}]
Introduce the auxiliary function \(L\) by
\(L(t) = \log(e/t)\).
For every positive real number \(t\) at most \(1\),
\[
\varphi''(t)
\,=\,
\frac{L(t)+2}{tL(t)^3}
\,>\,
0
\,.
\]
Thus, \(\varphi\) is convex, and 
\[
(R,s)
\longmapsto
R\varphi\left(\frac{s}{R}\right)
\]
is jointly convex. Direct differentiation gives
\[
\begin{aligned}
\frac{\partial}{\partial R}
\left[
R\varphi\left(\frac{s}{R}\right)
\right]
&\,=\,
-\frac{s}{R\log(eR/s)^2}
\,\leq\,
0
\,,
\\
\frac{\partial}{\partial s}
\left[
R\varphi\left(\frac{s}{R}\right)
\right]
&\,=\,
\frac{\log(eR/s)+1}{\log(eR/s)^2}
\,\geq\,
0
\,.
\end{aligned}
\]
Fix a positive real number \(R\), and introduce the function
\(L_R(u)=\log\left(\frac{eR}{pq\,u}\right).\)
For every positive real number \(u\) at most \(R/(pq)\), direct
differentiation gives
\[
\begin{aligned}
\frac{d}{du}
\left[
R\varphi\left(\frac{pq\,u}{R}\right)
\right]
&\,=\,
pq\,\frac{L_R(u)+1}{L_R(u)^2}
\,\leq\,
\frac{2pq}{L_R(u)}
\,,
\\
\frac{d^2}{du^2}
\left[
R\varphi\left(\frac{pq\,u}{R}\right)
\right]
&\,=\,
pq\,\frac{L_R(u)+2}{uL_R(u)^3}
\,>\,
0
\,.
\end{aligned}
\]
For every real number \(t\) between \(u\) and \(v\), monotonicity gives
\(L_R(t) \geq L_R(v)\).
Integrating the first derivative between the two endpoints proves the last
estimate. The boundary cases follow by continuity.
\end{proof}

\end{document}